\def\th@plain{%
  \itshape 
}
\renewenvironment{proof}[1][\proofname]{\par
  \pushQED{\qed}%
  \normalfont \topsep6\p@\@plus6\p@\relax
  \trivlist
  \item[\hskip\labelsep
        \bfseries
    #1\@addpunct{.}]\ignorespaces
}{%
  \popQED\endtrivlist\@endpefalse
}
\newtheorem{thm}{Theorem}[section]
\newtheorem{cor}[thm]{Corollary}
\newtheorem{claim}[thm]{Claim}
\newtheorem{lemma}[thm]{Lemma}
\newtheorem{example}[thm]{Example}
\newtheorem{defn}[thm]{Definition}
\newtheorem{op}[thm]{Open Question}
\newtheorem*{thm-subcubic16}{Theorem~\ref{listthm1}}
\newtheorem*{thm-subcubic23}{Theorem~\ref{listthm2}}
\numberwithin{equation}{section}
\DeclareMathOperator{\mad}{mad}
\newcommand{\Lceil}{\left\lceil}
\newcommand{\Rceil}{\right\rceil}
\newcommand{\ceil}[1]{{\Lceil{#1}\Rceil}}
\newcommand{\pc}[2]{packing $(1^{#1}, 2^{#2})$-coloring}
\newcommand{\pch}[2]{packing $(1^{#1}, 2^{#2})$-choosable}
\begin{document}
\title{\LARGE  Between proper and square colorings of sparse graphs

}

\author{
Ilkyoo Choi\thanks{Department of Mathematics, Hankuk University of Foreign Studies, Yongin-si, Gyeonggi-do, Republic of Korea.
 E-mail: \texttt {ilkyoo@hufs.ac.kr}
 and  Discrete Mathematics Group, Institute for Basic Science (IBS), Daejeon, Republic of Korea.
 This work was supported by 
 the Hankuk University of Foreign Studies Research Fund, the Institute for Basic Science (IBS-R029-C1), and the National Research Foundation of Korea(NRF) grant funded by the Korea government(MSIT) (RS-2025-23324220).}
\and
Xujun Liu\thanks{Department of Applied Mathematics, Xi'an Jiaotong-Liverpool University, Suzhou, Jiangsu Province, 215123, China, \texttt{xujun.liu@xjtlu.edu.cn}; the research of X. Liu was supported by the National Natural Science Foundation of China under grant No.~12401466 and the Research Development Fund RDF-21-02-066 of Xi'an Jiaotong-Liverpool University.}
}


\maketitle

\begin{abstract}
\baselineskip 0.60cm
An \emph{$i$-independent set} is a set of vertices whose pairwise distance is at least $i+1$. 
A \emph{proper coloring} (resp. a \emph{square coloring}) of a graph is a partition of its vertices into independent (resp. $2$-independent) sets. 
A \emph{\pc{\ell}{k}} of a graph is a partition of its vertices into $\ell$ independent sets and $k$ $2$-independent sets; 
this is an intermediate coloring between proper coloring and square coloring. 
We investigate classes of sparse graphs that have a proper $(\ell+1)$-coloring but no \pc{\ell}{k} for any finite $k$. 

The Four Color Theorem states that every planar graph is packing $(1^4)$-colorable, and Gr\"otzsch's Theorem says every planar graph with girth at least $4$ is packing $(1^3)$-colorable.
However, for every fixed $k$, we construct a planar graph with no \pc{3}{k} and a planar graph with girth $6$ that has no \pc{2}{k}. 
Moreover, for every positive integer $\ell$, we completely determine the minimum girth condition $g(\ell)$ for which every planar graph with girth at least $g(\ell)$ has a \pc{\ell}{f(\ell)} for some finite $f(\ell)$. 
Our results are actually in terms of maximum average degree.

We also study the list version of packing colorings.
We extend two results of Gastineau and Togni by showing every subcubic graph is both \pch{1}{6} and \pch{2}{3}, and our results are sharp.
In addition, we strengthen Voigt's example of a planar graph that is not $4$-choosable by constructing a planar graph that is not \pch{4}{k} for every positive integer $k$.


\vspace{3mm}\noindent \emph{Keywords}: graph coloring; planar graph; square coloring; list coloring; packing coloring; packing list-coloring 
\end{abstract}

\baselineskip 0.60cm

\section{Introduction}


All graphs in this paper are simple, which means no loops and no parallel edges.
For a graph $G$, let $V(G)$ and $E(G)$ denote the vertex set and edge set, respectively, of $G$. 
Given a graph $G$, the graph $G^i$ is defined as $V(G^i) = V(G)$ and $E(G^i) = \{uv \text{ }|\text{ } u,v \in V(G) \text{ and } d_G(u,v) \le i\}$.
A graph is \emph{(sub)cubic} if every vertex has degree (at most) $3$. 
A {\it $k$-vertex} (resp. {\it $k^+$-vertex}, {\it $k^-$-vertex}) is a vertex of degree $k$ (resp. at least $k$, at most $k$).
A {\it $k$-neighbor} of a vertex $v$ is a neighbor of $v$ that is a $k$-vertex; a {\it $k^+$-neighbor} and a {\it $k^-$-neighbor} are defined analogously. 
For a vertex $v$, the {\it closed neighborhood} of $v$, denoted $N[v]$, is $N(v)\cup\{v\}$. 
The \emph{maximum average degree} of a graph $G$, denoted $\mad(G)$, is $\max \left\{\frac{2|E(H)|}{|V(H)|}: H \subseteq G\right\}$.
The \emph{girth} of a graph is the length of a shortest cycle. 
Generalizing the notion of an independent set, an {\it $i$-independent set} is a set of vertices whose pairwise distance is at least $i+1$. 
Note that a $1$-independent set is an independent set.
A {\it proper $k$-coloring} of a graph is a partition of its vertex set into $k$ independent sets. 
A \emph{square $k$-coloring} of a graph is a partition of its vertex set into $k$ $2$-independent sets. 
The minimum $k$ for which a graph $G$ has a square $k$-coloring is denoted $\chi_2(G)$. 

For a sequence $S = (s_1, \ldots, s_k)$ of non-decreasing positive integers, a \emph{packing $S$-coloring} of a graph $G$ is a partition of $V(G)$ into $V_1, \ldots, V_k$ such that each $V_i$ is $s_i$-independent.
We say a vertex in $V_i$ is colored with an {\em $i$-color}, which is usually denoted by $i$ with some subscript as in $i_x$ or $i_y$.   
We use $(s_1^{t_1}, \ldots, s_k^{t_k})$ to denote the tuple where $s_i$ appears with multiplicity $t_i$ for every $i$. 
A packing $S$-coloring is a proper $k$-coloring (resp. square $k$-coloring) when all entries of $S$ are $1$ (resp. 2).
In this paper, we will focus on packing $S$-colorings (and its list version, defined later) where each entry of $S$ is either $1$ or $2$; this is a coloring between a proper coloring and a square coloring. 
We are particularly interested in determining situations where a $1$-color can or cannot be replaced with finitely many $2$-colors.


\subsection{Packing coloring for sparse graphs}

The celebrated Four Color Theorem~\cite{AH1,AHK1,RSST1} states that every planar graph has a proper $4$-coloring. 
Since the closed neighborhood of a $k$-vertex in any graph $G$ forms a complete graph on $k+1$ vertices in $G^2$, there is no constant $C$ for which all planar graphs have a square $C$-coloring. 
The well-known conjecture by Wegner~\cite{W1} from 1977 states that if $G$ is a planar graph with maximum degree $\Delta$, then 
\[
\chi_2(G) \le 
\begin{cases} 
7 & \mbox{ if $\Delta = 3$}\\
\Delta + 5 & \mbox{ if $ \Delta\in\{4,5,6, 7\}$}\\
\left\lfloor \frac{3}{2} \Delta \right\rfloor + 1 & \mbox{ if $\Delta \ge 8$}. \\
\end{cases}
\]
Thomassen~\cite{T2}, and independently Hartke, Jahanbekam, and Thomas~\cite{HJT1} confirmed the above conjecture when $\Delta = 3$. Wegner's conjecture remains open for all $\Delta \ge 4$.
Recently, Bousquet, Deschamps, de Meyer, and Pierron~\cite{BDMP1} proved the current best upper bound of $12$ when $\Delta = 4$. 
Amini, Esperet, and van den Heuvel~\cite{AEH1}, and independently Havet, van den Heuvel, McDiarmid, and Reed~\cite{HHMR1,HHMR2} showed that the conjecture holds asymptotically, namely, $\chi_2(G) \le \frac{3}{2} \Delta + o (\Delta)$ as $\Delta \to \infty$. 

We consider packing colorings of planar graphs with no restrictions on vertex degrees. 
We initiate the study of given an integer $\ell$, determining the minimum girth condition $g=g(\ell)$ for which there exists a finite $f_g(\ell)$ where every planar graph with girth at least $g$ has a \pc{\ell}{f_g(\ell)}. 
We succeed in determining $g(\ell)$ for all positive integers $\ell$. 
It would be interesting to determine the minimum value of $f_g(\ell)$ as well. 
The Four Color Theorem states that every planar graph is packing $(1^4)$-colorable, so $g(4)=3$ and $f_3(\ell)=0$ for all $\ell\geq 4$. 
Gr\"otzsch~\cite{G1} proved that every planar graph with girth at least $4$ is packing $(1^3)$-colorable, so $g(3)\leq 4$. 
However, for every fixed $k$, we construct a planar graph with no \pc{3}{k}, so $g(3)=4$. 
(See \Cref{ex:general}.)

For $\ell=2$, we construct a planar graph with girth $6$ that has no \pc{2}{k} for every finite $k$, so $g(2)>6$.  
(See \Cref{girth6}.)
One of our main results implies that every planar graph with girth at least $7$ has a \pc{2}{12}, so $g(2)=7$ and $f_7(2)\leq 12$.
In fact, we prove a stronger result in terms of maximum average degree. 
Note that Euler's formula implies that a planar graph with girth at least $g$ has maximum average degree less than $\frac{2g}{g-2}$.

\begin{thm}\label{thm:main-theorem-1}
Every graph with maximum average degree less than $\frac{14}{5}$ has a \pc{2}{12}. \end{thm}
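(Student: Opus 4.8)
The plan is to use the discharging method on a hypothetical minimal counterexample $G$. Assume $G$ is a graph with $\mad(G) < \frac{14}{5}$ that admits no packing $(1^2, 2^{12})$-coloring, and that $G$ has the fewest vertices among all such graphs; in particular every proper subgraph of $G$ has a packing $(1^2,2^{12})$-coloring. We first collect a list of reducible configurations: small substructures (low-degree vertices and short paths/cycles of low-degree vertices, e.g. $1$-vertices, adjacent $2$-vertices, $2$-vertices with a common low-degree neighbor, and perhaps $3$-vertices with several $2$-neighbors) that cannot appear in $G$. The argument for each is the standard one: delete the offending piece (or a vertex of it), apply minimality to colour what remains, and then show the deleted vertices can be coloured by counting forbidden colours. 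A $1$-color is blocked only by an adjacent vertex, while a $2$-color on a vertex $w$ is blocked by vertices at distance $\le 2$ from $w$; so when the deleted vertices have low degree, the number of colours forbidden at each is small enough (out of the $2+12=14$ available) that a greedy or slightly-more-careful extension succeeds. The key quantitative point is that a $2$-vertex sees at most two neighbours plus their other neighbours, and since its neighbours also have bounded degree in the relevant configurations, the forbidden set stays well below $14$.

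Once the list of reducible configurations is fixed, I set up discharging: give each vertex $v$ charge $d(v) - \frac{14}{5}$, so the total charge is $\sum_v (d(v)-\tfrac{14}{5}) = 2|E(G)| - \tfrac{14}{5}|V(G)| < 0$ by the hypothesis $\mad(G) < \frac{14}{5}$. Vertices of degree $\ge 3$ start with nonnegative charge (a $3$-vertex has $\frac15$), and only $1$- and $2$-vertices are in deficit ($1$-vertices have $-\frac95$, $2$-vertices have $-\frac45$). The discharging rules will move charge from $3^+$-vertices to nearby $2^-$-vertices — for instance, each $3^+$-vertex sends $\frac25$ to each adjacent $2$-vertex and an appropriate larger amount along paths of $2$-vertices, with rules tuned so that the reducible configurations are exactly the cases where a vertex would end with negative charge. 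Then I verify that, after discharging, every vertex has nonnegative final charge, contradicting the negative total. The constant $\frac{14}{5}$ should emerge precisely as the threshold at which the charge bookkeeping balances given the reducible configurations — a $3$-vertex with its $\frac15$ can afford to help at most a limited number of $2$-neighbours, and configurations forcing it to help more are shown reducible.

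The main obstacle, as usual in this type of argument, is finding the right set of reducible configurations: it must be rich enough that discharging goes through with the target $\mad$ bound $\frac{14}{5}$, yet every configuration on the list must genuinely be reducible with only $14$ colours of which $12$ are $2$-colors. The delicate reductions are likely those involving a $3$-vertex adjacent to two or three $2$-vertices, or short chains of $2$-vertices, where after deleting part of the configuration one must re-colour several vertices simultaneously and argue — possibly by a small case analysis on which colours the boundary already uses, exploiting that a $2$-color at one vertex forbids that colour only within distance $2$ — that a valid assignment of the remaining $1$- and $2$-colors exists. I would expect to need the full strength of having two $1$-colors available (not just one) to resolve the tightest cases, since $1$-colors are only locally constrained and can be reused freely outside the immediate neighbourhood; getting the interaction between the two $1$-colors and the twelve $2$-colors right in these reductions is where the real work lies. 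The bound $12$ on the number of $2$-colors, matched against the girth-$6$ lower-bound construction referenced as \Cref{girth6}, suggests $12$ is not far from optimal, so the reductions have little slack.
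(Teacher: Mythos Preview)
Your overall framework---minimal counterexample, reducible configurations, discharging with target $\frac{14}{5}$---matches the paper's, but your sketch contains a real gap in how the bookkeeping is supposed to close. You describe charge flowing only from $3^+$-vertices down to $2$-vertices, and you expect configurations like ``adjacent $2$-vertices'' or ``a $3$-vertex with several $2$-neighbours'' to be reducible outright. Neither of these is true. A $3$-vertex carries only $\frac15$ of surplus, so it cannot afford to send $\frac25$ to even a single $2$-neighbour without going negative; yet a $3$-vertex with one or two $2$-neighbours is \emph{not} a reducible configuration on its own. Something else must feed charge to that $3$-vertex.

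The missing mechanism is that the reducibility lemmas force \emph{high-degree} vertices to appear near these tight spots, and it is those high-degree vertices that supply the charge. Concretely: a $2$-thread is not forbidden, but its two outer neighbours must both have degree at least $k+2=14$; a $3$-vertex with two $2$-neighbours is not forbidden, but its third neighbour must have degree at least $k+1=13$, and at least one of the three second-neighbours must have degree $\ge 14$; a $3$-vertex with one $2$-neighbour must have a neighbour of degree $\ge \lceil(k+1)/2\rceil=7$. These ``if the local picture is tight, a large-degree vertex is nearby'' lemmas are where $k=12$ enters, and they are what make $\frac{14}{5}$ the right threshold: the discharging rules have $14^+$-vertices send $\frac45$ to each neighbour, vertices of degree $5$--$13$ send $\frac25$ to each neighbour, and $3$-vertices \emph{receive} charge from these before (sometimes) passing $\frac25$ on to a $2$-neighbour. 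Your sketch, which keeps the argument entirely among small-degree vertices, has no way to balance a $3$-vertex with $2$-neighbours and no way to explain why $12$ is the right number of $2$-colors; the link between $12$ and $\frac{14}{5}$ is precisely that $12+2=14$ is the forced-degree threshold in the key reductions.
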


\begin{cor}\label{girth7}
Every planar graph with girth at least $7$ has a \pc{2}{12}.
\end{cor}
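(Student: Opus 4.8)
The plan is to derive \Cref{girth7} directly from \Cref{thm:main-theorem-1}: since that theorem already handles every graph of maximum average degree less than $\frac{14}{5}$, it suffices to show that every planar graph $G$ of girth at least $7$ satisfies $\mad(G) < \frac{14}{5}$. So the whole corollary reduces to the standard Euler's-formula sparsity estimate for planar graphs of bounded girth.

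To carry this out, recall that $\mad(G) = \max\{\frac{2|E(H)|}{|V(H)|} : H \subseteq G,\ V(H) \neq \emptyset\}$, and that the maximum is attained by some subgraph which we may take to be connected (otherwise replace it by its densest component) with $n := |V(H)| \ge 1$. If $H$ has no cycle, then $|E(H)| \le n-1 < \frac{7}{5}n$, so $\frac{2|E(H)|}{n} < \frac{14}{5}$. If $H$ has a cycle, then $H$ is a connected plane graph in which every face boundary is a closed walk of length at least the girth of $H$, which is at least the girth of $G$, hence at least $7$; double counting edge--face incidences gives $2|E(H)| \ge 7|F(H)|$. Plugging this into Euler's formula $n - |E(H)| + |F(H)| = 2$ yields $|E(H)| \le \frac{7}{5}(n-2) < \frac{7}{5}n$, so again $\frac{2|E(H)|}{n} < \frac{14}{5}$. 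Taking the maximum over all such $H$ gives $\mad(G) < \frac{14}{5}$, and \Cref{thm:main-theorem-1} then produces the desired \pc{2}{12}.

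I do not expect any obstacle in the corollary itself; it is an immediate consequence once \Cref{thm:main-theorem-1} is in hand, and the only computation is the elementary bound above (note $\frac{2\cdot 7}{7-2} = \frac{14}{5}$, which is why girth $7$ is precisely the threshold matching the theorem's hypothesis). The genuine difficulty lies entirely in \Cref{thm:main-theorem-1}, which one would expect to prove by a discharging argument on a minimum counterexample --- ruling out a list of reducible configurations, assigning each vertex $v$ the charge $\deg(v) - \frac{14}{5}$ so the total is negative, and redistributing to force a contradiction --- but since that theorem may be assumed here, \Cref{girth7} follows at once.
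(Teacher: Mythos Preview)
Your proposal is correct and matches the paper's approach exactly: the paper simply states \Cref{girth7} as an immediate corollary of \Cref{thm:main-theorem-1}, having already noted that Euler's formula gives $\mad(G)<\frac{2g}{g-2}$ for planar graphs of girth at least $g$. Your write-up just spells out this standard estimate, so there is nothing to add.
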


Note that the graph $F(1, k)$ in \Cref{ex:general} is a tree (so it is planar) that does not have a \pc{1}{k} for any $k$, so $g(1)$ does not exist.

As $f_7(2)\leq 12$, it would be interesting to determine the exact girth condition for a planar graph to have a \pc{2}{k} for all values of $k\leq 11$.

\begin{op}\label{op:gk}
For $k\in\{1,\ldots, 11\}$, what is the minimum positive integer $g_k$ such that every planar graph with girth at least $g_k$ has a \pc{2}{k}?
\end{op}

When $k=1$, Brandt, Ferrara, Kumbhat, Loeb, Stolee, and Yancey~\cite{BFKLSY1} proved a result that implies every graph $G$ with $\mad(G)<\frac{5}{2}$ has a \pc{2}{1}.
The bound on maximum average degree is tight, and their tightness example is also a graph that does not have a \pc{2}{1}.
Their result implies that planar graphs with girth at least $10$ have a \pc{2}{1}.
On the other hand, we were able to construct a planar graph with girth $7$ that has no \pc{2}{1}. (See Example~\ref{sparse}.)
Therefore $g_1\in\{8,9,10\}$. 

When $k=2$, Choi and Park~\cite{CP1} proved a result that implies every graph $G$ with $\mad(G) \le \frac{8}{3}$ has a \pc{2}{2}. As a corollary of their result, every planar graph with girth at least $8$ has a \pc{2}{2}.
Recall that we found in Example~\ref{girth6}, for each positive integer $k$, a planar graph with girth $6$ that has no \pc{2}{k}. 
Therefore, for $k\in\{2, \ldots, 11\}$, $g_k\in\{7,8\}$.

\subsection{Packing choosability for sparse graphs}


A {\em list assignment} of a graph $G$ assigns a list $L(v)$ of available colors to each vertex $v$ of $G$. 
An {\em $L$-coloring} is an assignment of colors from $L(v)$ to each vertex $v \in V(G)$ such that adjacent vertices receive distinct colors. 
A graph $G$ is {\em $k$-choosable} if an $L$-coloring always exists provided $|L(v)| \ge k$ for each vertex $v \in V(G)$. 
The {\em list chromatic number} of a graph $G$, denoted $\chi_{\ell}(G)$, is the minimum $k$ such that $G$ is $k$-choosable. 
This notion was introduced by Vizing~\cite{V1}, and independently by Erd\H{o}s, Rubin, and Taylor~\cite{ERT1}.
Ever since, extending a chromatic parameter to its list version has been a widely pursued avenue of research in the literature. 
In this vein, we define the list version of packing $S$-coloring and define packing $S$-choosable graphs. 



\begin{defn}
Let $\mathcal L_s$ be a set of $s$ pairwise disjoint sets $L_1, \ldots, L_s$ (of colors). 
For a graph $G$, a \emph{packing $\mathcal L_s$-list assignment} $L$ assigns to each vertex $v \in V(G)$ a list $L(v)\subseteq \bigcup_{L_i\in\mathcal L_s} L_i$ of available colors for $v$. 
A \emph{packing $L$-coloring} is a function $\varphi$ such that 

(1) $\varphi(v) \in L(v)$ for each $v \in V(G)$, and 

(2) the set of vertices $v$ with $\varphi(v) \in L_i$ is $i$-independent. 
\end{defn}

\begin{defn}
A graph $G$ is \emph{packing $(1^{t_1}, \ldots, s^{t_s})$-choosable} if it has a packing $L$-coloring for every $\mathcal L_s$-list assignment $L$ where $|L(v)\cap L_i| \ge t_i$ for all $v \in V(G)$ and every $i\in\{1, \ldots, s\}$. 
\end{defn}


We first discuss subcubic graphs. 
Brooks's Theorem~\cite{1941Brooks} states that every subcubic graph has a proper $3$-coloring except $K_4$. 
Cranston and Kim~\cite{CK1} proved that every  subcubic graph has a square $8$-coloring except the Petersen graph.  
Gastineau and Togni~\cite{GT1} showed that every subcubic graph has both a \pc{1}{6} and a \pc{2}{3}. 
Both results are sharp since the Petersen graph has neither a \pc{1}{5} nor a \pc{2}{2}.
Furthermore, they posed the question of whether every subcubic graph except the Petersen graph has a \pc{1}{5}.
(See~\cite{LW1,LZZ1} for related results in this direction.)
%
We extend their results by showing that every subcubic graph is \pch{1}{6} and \pch{2}{3}. Our results are sharp by the Petersen graph.
Furthermore, our proof methods are different from those of Gastineau and Togni~\cite{GT1}.

\begin{thm-subcubic16}
Every subcubic graph is \pch{1}{6}.    
\end{thm-subcubic16}

\begin{thm-subcubic23}
Every subcubic graph is \pch{2}{3}.    
\end{thm-subcubic23}

We also investigate packing choosability for planar graphs.
In contrast to the Four Color Theorem, Vogit~\cite{V2} showed that there exist planar graphs that are not $4$-choosable, which is equivalent to packing $(1^4)$-choosable. 
We strengthen this result by, for each $k$, constructing a planar graph that is not \pch{4}{k} (see Example~\ref{listexample}); namely, the addition of an arbitrary number of $2$-colors does not ensure a coloring. 
Recall that Thomassen~\cite{T1} in 1994 showed via a remarkable proof that every planar graph is $5$-choosable. 

\bigskip

In Section~\ref{sec:ex}, we first present all of our examples mentioned in the introduction. 
We then prove Theorem~\ref{thm:main-theorem-1} in Section~\ref{proofmaintheorem}, which is split into two subsections. 
In Section~\ref{reduceible_configuration}, we show that certain configurations cannot appear in a minimum counterexample. 
We complete the proof in Section~\ref{discharging} via the discharging method. 
 Theorem~\ref{listthm1} and Theorem~\ref{listthm2} are proven in Section~\ref{list}.
 We end the paper with some open questions in the last section.

\section{Examples}\label{sec:ex}

We present three examples of graphs that have a proper $(\ell+1)$-coloring but do not have a \pc{\ell}{k} for any finite $k$. 
The fourth example is a planar graph that is not \pch{4}{k} for any finite $k$. 

 \begin{figure}[ht]
 \begin{center}
   \includegraphics[scale=0.18]{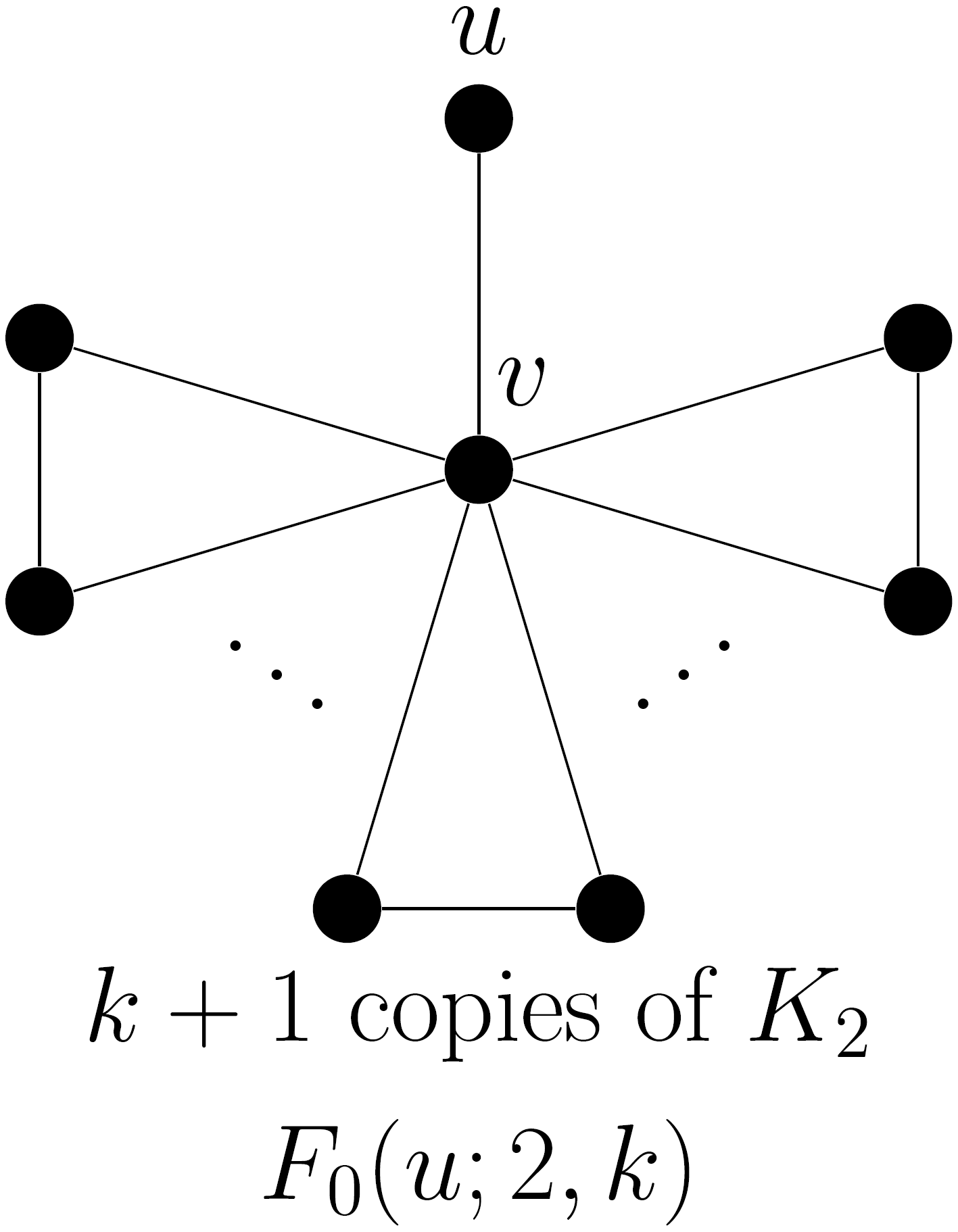} \hspace{8mm}
   \includegraphics[scale=0.16]{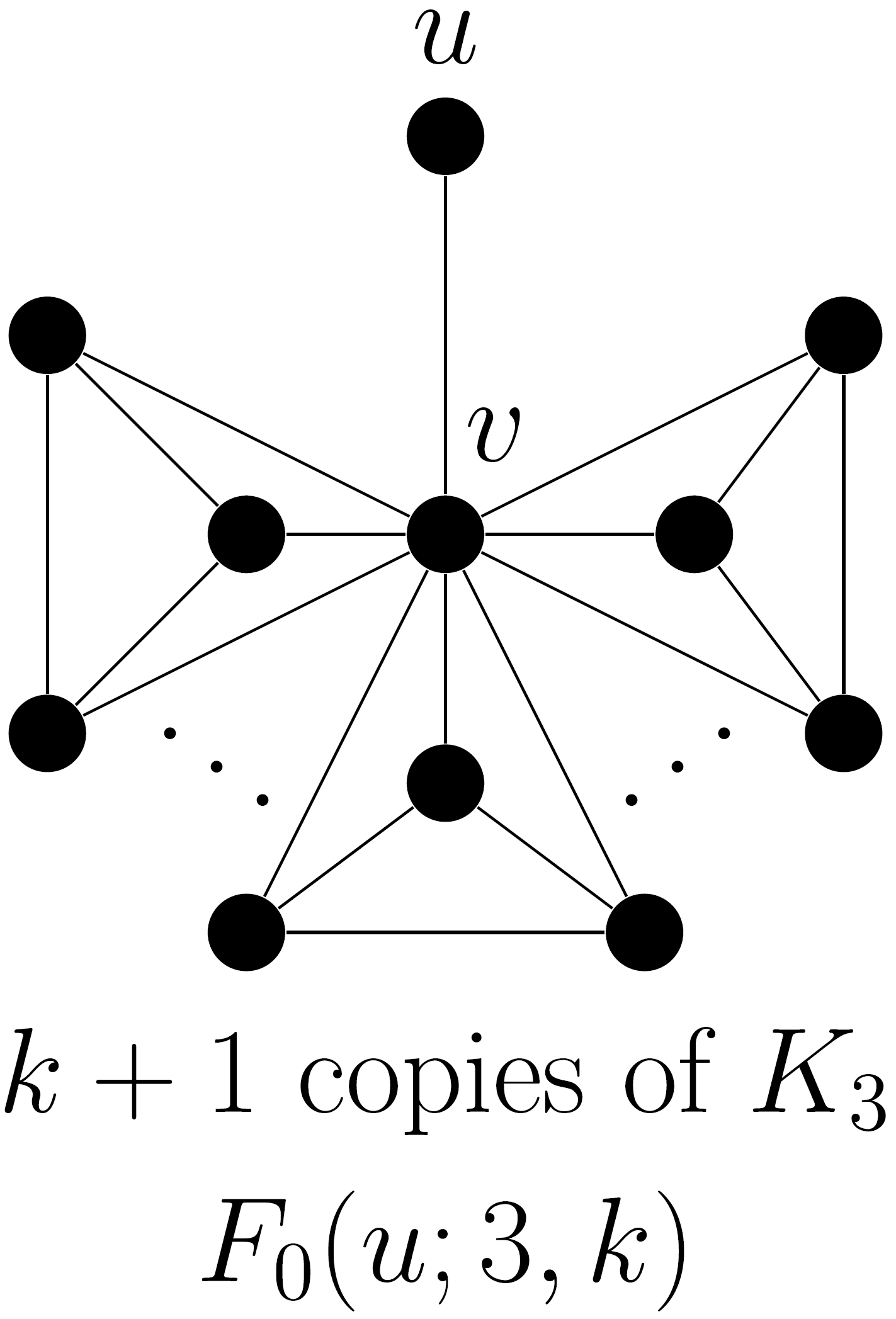} \hspace{10mm}
   \includegraphics[scale=0.6]{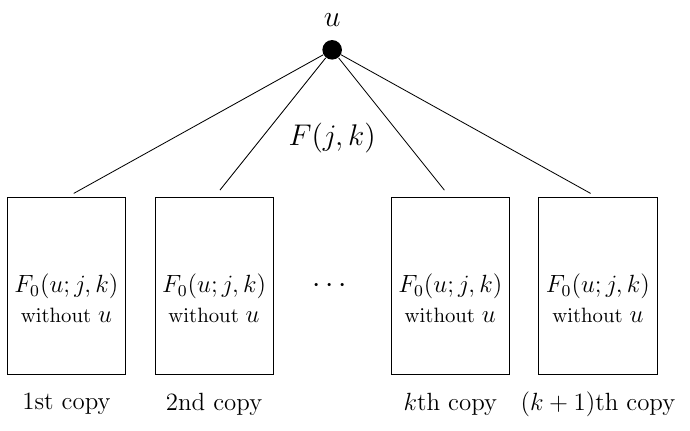} 
\caption{Graphs with a proper $(j+1)$-coloring but no \pc{j}{k}.}\label{fig:example_1}
\end{center}
\vspace{-8mm}
\end{figure}

\begin{example}\label{ex:general}
For positive integers $j, k$, let $F_0(u; j, k)$ be the join of $K_1$ and $(k+1)K_{j}\cup K_1$ where $u$ is a $1$-vertex.
Obtain the graph $F(j, k)$ by starting with $k+1$ copies of $F_0(u; j, k)$ and identifying all copies of $u$ into a single vertex. See Figure~\ref{fig:example_1}.
\end{example}

A graph with a \pc{\ell}{k} is not guaranteed to have a \pc{\ell-1}{f(k)} for any function $f(k)$, even when restricted to bipartite graphs, outerplanar graphs, and planar graphs. 
Indeed, the graph $F(\ell, k)$ in \Cref{ex:general} has a packing $(1^{\ell+1})$-coloring, yet it does not have a \pc{\ell}{k} for any finite $k$.
To see this, suppose $F(\ell, k)$ has a \pc{\ell}{k}. 
Then the vertex $v$ of maximum degree in each copy of $F_0(u; \ell, k)$ must receive a $2$-color; 
if $v$ gets a $1$-color, then each copy of $K_{\ell+1}$ must have a $2$-colored vertex $w$, but there are $k+1$ such $w$'s in the neighborhood of $v$, which is a contradiction. 
Now, all $k+1$ neighbors of $u$ are $2$-colored, which is a contradiction.
Hence, $F(\ell, k)$ has no \pc{\ell}{k}. 
Note that $F(1, k)$ is a tree, $F(2, k)$ is outerplanar, and $F(3, k)$ is planar. 

\begin{figure}[ht]
\begin{center}
  \includegraphics[scale=0.16]{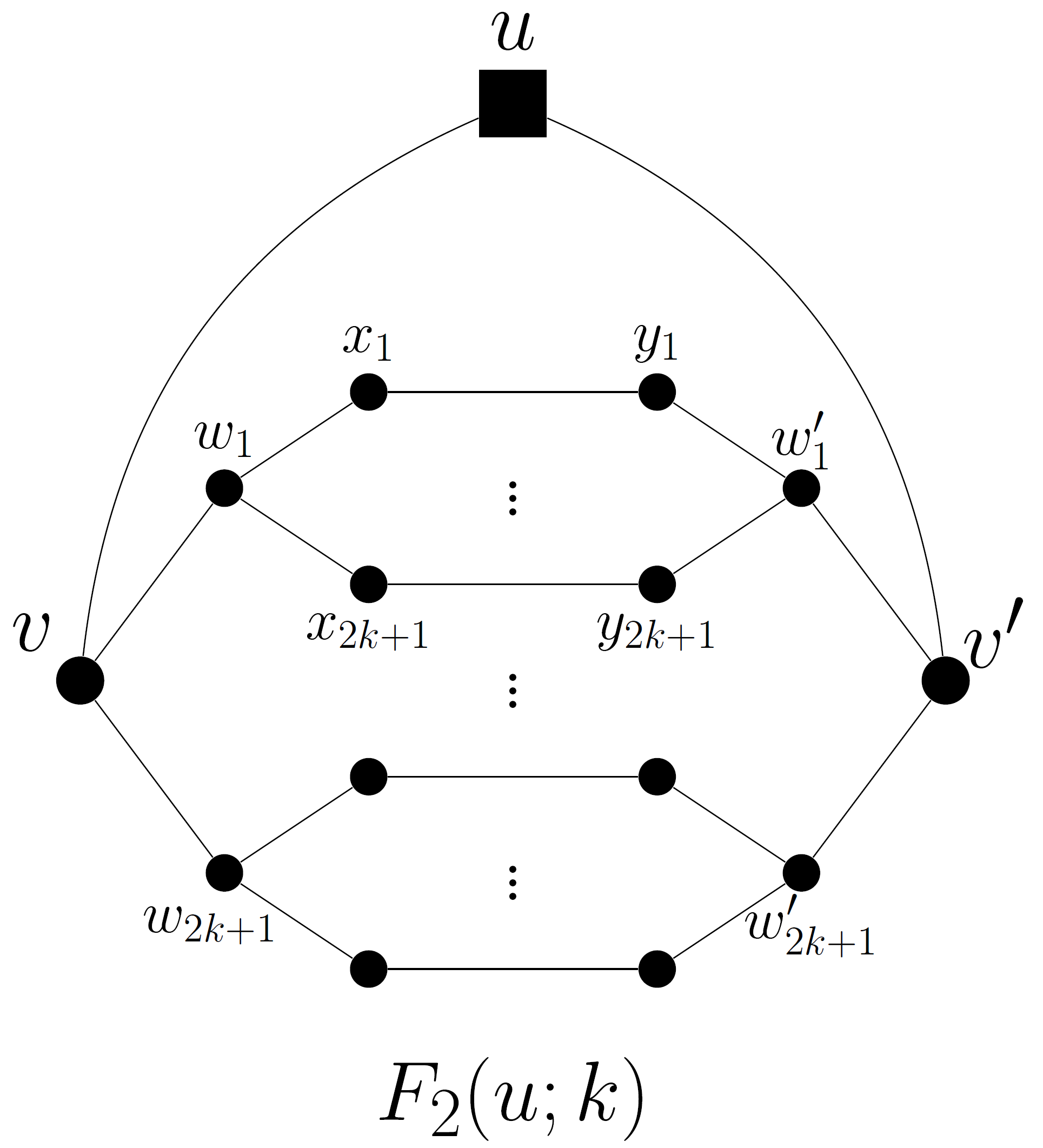} \hspace{5mm}
  \includegraphics[scale=0.4]{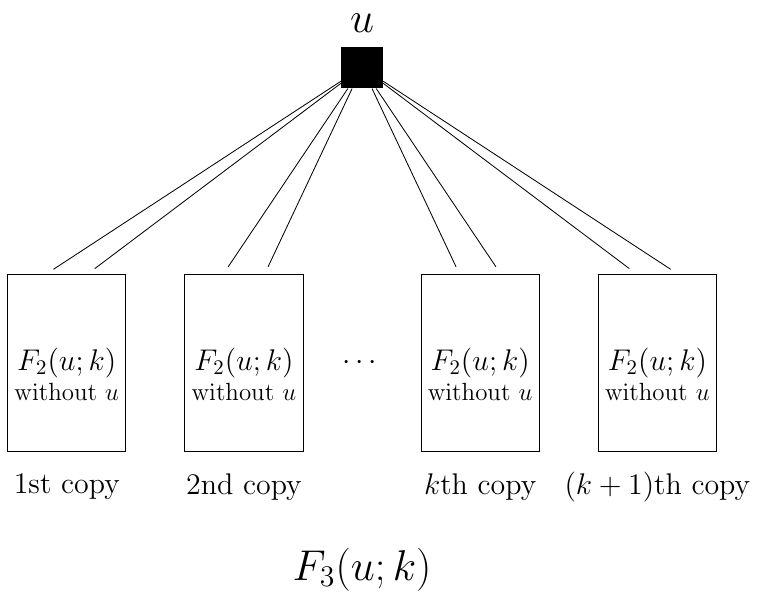} \hspace{5mm}
  \includegraphics[scale=0.4]{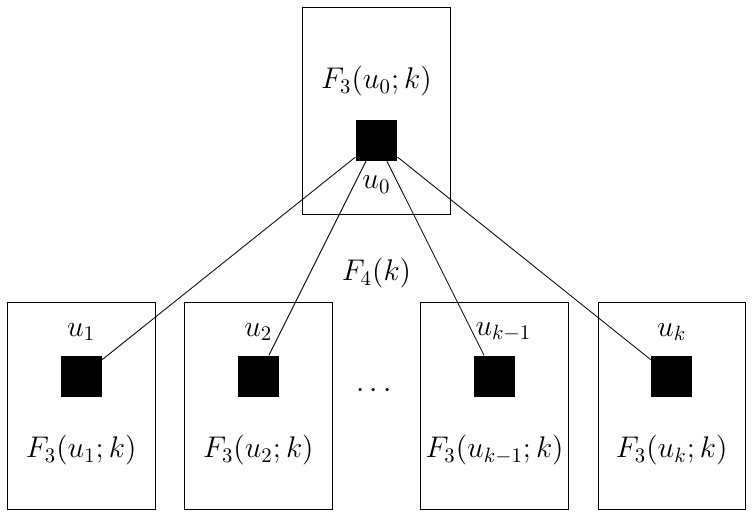} 
\caption{A planar graph with girth $6$ that has no \pc{2}{k}.}\label{fig:example_5}
\end{center}
\vspace{-8mm}
\end{figure}

\begin{example}\label{girth6}
    For a positive integer $k$, let $F_1(w, w'; k)$ be the graph obtained by connecting two vertices $w$ and $w'$ with $2k+1$ internally vertex disjoint paths of length $3$. 
    Obtain the graph $F_2(u; k)$ by starting with $2k+1$ copies of $F_1(w, w'; k)$ and a disjoint path $vuv'$, and adding $vw$ and $v'w'$ for $w, w'$ in each copy of $F_1(w, w'; k)$. 
    Obtain the graph $F_3(u; k)$ by starting with $k+1$ copies of $F_2(u; k)$ and identifying all copies of $u$ into a single vertex.
    Finally, obtain the graph $F_4(k)$ by starting with $F_3(u_0; k)$ and $k$ copies of $F_3(u; k)$, and adding $u_0u$ for $u$ in each copy of $F_3(u; k)$. 
    See \Cref{fig:example_5}.
\end{example}


It is easy to check that the graph $F_4(k)$ is planar and has girth $6$, so $F_4(k)$ has a proper $3$-coloring.  
We claim that $F_4(k)$ has no \pc{2}{k} for any finite $k$.
Suppose $F_4(k)$ has a \pc{2}{k} $\varphi$ with $1$-colors $1_a, 1_b$. 
Since the $k+1$ vertices corresponding to $u$ in each copy of $F_3(u; k)$ have pairwise distance at most $2$, a vertex among these vertices must be colored with a $1$-color, say $\varphi(u_0)=1_a$.
Since the $2k+2$ neighbors of $u_0$ in $F_3(u_0; k)$ have pairwise distance at most $2$, $k+2$ vertices among these neighbors must be colored with a $1$-color. 
Moreover, two of these vertices $v_0, v'_0$ must be in the same copy $Y$ of $F_2(u_0;k)$. 
Since $\varphi(u_0)=1_a$, $\varphi(v_0)=\varphi(v'_0)=1_b$.
Since the $2k+1$ neighbors of $v_0$ (resp. $v'_0$) excluding $u_0$ in $Y$ have pairwise distance at most $2$, $k+1$ vertices among these neighbors must be colored with a $1$-color. 
Therefore, $Y$ has a subgraph $F_1(w_0, w_0';k)$ for which $\varphi(w_0)=\varphi(w'_0)=1_a$.
Now, the $2k+1$ neighbors of $w_0$ (resp. $w'_0$) excluding $v_0$ (resp. $v'_0$) in $F_1(w_0, w_0';k)$ have pairwise distance at most $2$, so $k+1$ vertices among these neighbors must be colored with a $1$-color. 
Hence, there is a path of length $3$ from $w_0$ to $w'_0$ where the two internal vertices receive the same $1$-color, which is a contradiction. 
Hence, $F_4(k)$ has no \pc{2}{k}.

\def\xspace{1}
\def\yspace{0.75}

\def\figOneGadget{
\draw(0,0) node[blacknode](c){};
\draw (c)++(0,\yspace) node[label=above:$v_1$](){};
\draw (c)++(0,-\yspace) node[label=below:$v_2$](){};
\draw(-\xspace*2,0) node[blacknode,label=left:$x$](l){};
\draw(\xspace*2,0) node[blacknode,label=right:$y$](r){};
\foreach \i in {1,0,-1,-2}
    \draw (c)++(\xspace*\i+\xspace*0.5, \yspace*2) node[blacknode](u2\i){}
    (c)++(\xspace*\i+\xspace*0.5, -\yspace*2) node[blacknode](d2\i){};
\foreach \i in {0,1,-1}
    \draw (c)++(\xspace*\i, \yspace*1) node[blacknode](u1\i){}
    (c)++(\xspace*\i, -\yspace*1) node[blacknode](d1\i){};

\draw (d10)--(c)--(u10);
 \foreach \c in {u,d}
    \draw (l)--(\c2-2)--(\c2-1)--(\c20)--(\c21)--(r)
    (\c2-2)--(\c1-1)--(\c10)--(\c11)--(\c21);

\draw (l)++(0,-3) node[graynode,label=$x$](ll){};
\draw (r)++(0,-3) node[graynode,label=$y$](rr){};
\draw[thick,dotted] (ll) to  (rr);



}

\def\figOneGadgett{

\draw (0,0) node [graynode,label=above:$z$] (c){};
 \foreach \i in {1,...,9}
     \draw (c)++(-90+360/9*\i:1.5) node[graynode] (u\i){};
 \foreach \x [count=\xi from 2] in {1,...,8}                  \draw  (u\x)--(u\xi);
\draw  (u9)--(u1);

\foreach \c in {1,...,9}
    \draw[thick,dotted] (c) to (u\c);
\draw (c)++(0,-3) node[graynode,label=$z$](n){};
\draw (n) to [out=30, in=180-30,looseness=30](n) ;
}

\def\figOneEx{

\draw (0,0) node (c){};
 \foreach \i in {1,...,9}{
    \draw (c)++(-90+360/9*\i:1.5) node[graynode] (u\i){};
    \draw (u\i) to [out=-90+360/9*\i+45, in=-90+360/9*\i-45,looseness=30](u\i) ;
}
 \foreach \x [count=\xi from 2] in {1,...,8}                  \draw  (u\x)--(u\xi);
\draw  (u9)--(u1);

}

\begin{figure}

\begin{tikzpicture}
[scale=1,auto=left, 
blacknode/.style={circle,draw,fill=black,minimum size = 6pt,inner sep=0pt}, 
graynode/.style={circle,draw,fill=gray!30,minimum size = 6pt,inner sep=0pt}
]
\def\spacee{2}
 \begin{scope}[xshift=\spacee*0cm]\figOneGadget\end{scope}
 \begin{scope}[xshift=\spacee*2.75cm]\figOneGadgett\end{scope}
 \begin{scope}[xshift=\spacee*5.5cm]\figOneEx\end{scope}
\end{tikzpicture}

\caption{
A planar graph with girth $7$ that has no \pc{2}{1}.}
\label{fig:pc21-girth7}
\end{figure}

\begin{example}\label{sparse}
    Let $H^1_1(x,y)$ be the left graph in Figure~\ref{fig:pc21-girth7}.
    Obtain the graph $H^2_1(C,z)$ from an odd $7^+$-cycle $C$ and a vertex $z$ not on $C$ by adding a copy of $H^1_1(z, v)$ for each vertex $v$ on $C$. 
    Obtain the graph $H_1(C')$ from an odd $7^+$-cycle $C'$ and adding a copy of $H^2_1(C, v)$ for each vertex $v$ on $C'$. 
    See Figure~\ref{fig:pc21-girth7}. 
\end{example}

Let $H_1=H_1(C')$ for an odd $7^+$-cycle $C'$. 
It is easy to check that $H_1$ is planar and has girth $7$.
We claim that $H_1$ has no \pc{2}{1}.
Note that the distance between $x$ and $y$ in $H^1_1(x,y)$ is $5$. 
Suppose $H_1$ has a \pc{2}{1} $\varphi$.
Since $C'$ is an odd cycle, at least one vertex $v'$ on $C'$ received a $2$-color by $\varphi$.
Since $C$ is also an odd cycle in $H^2_1(C,v')$, at least one vertex $v$ on $C$ received a $2$-color by $\varphi$. 
Now, in $H^1_1(v,v')$, it must be that both $v_1$ and $v_2$ received $2$-colors by $\varphi$, which is a contradiction. 
Hence, $H_1$ has no \pc{2}{1}.  


\begin{figure}
\begin{center}
  \includegraphics[scale=0.7]{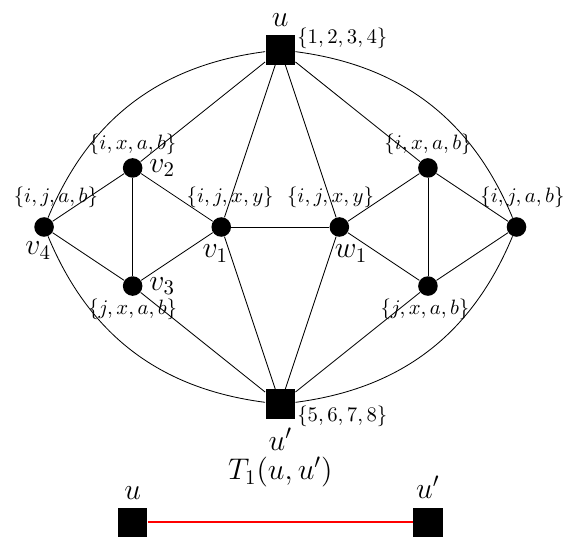} \hspace{0mm}
  \includegraphics[scale=0.72]{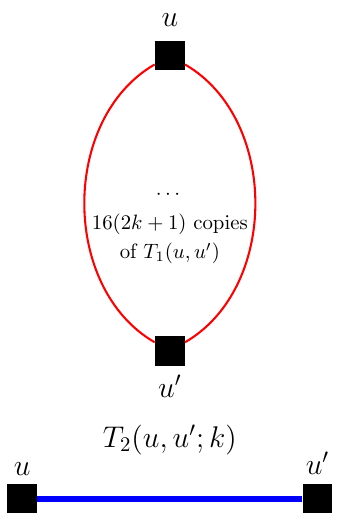} \hspace{5mm}
  \includegraphics[scale=0.6]{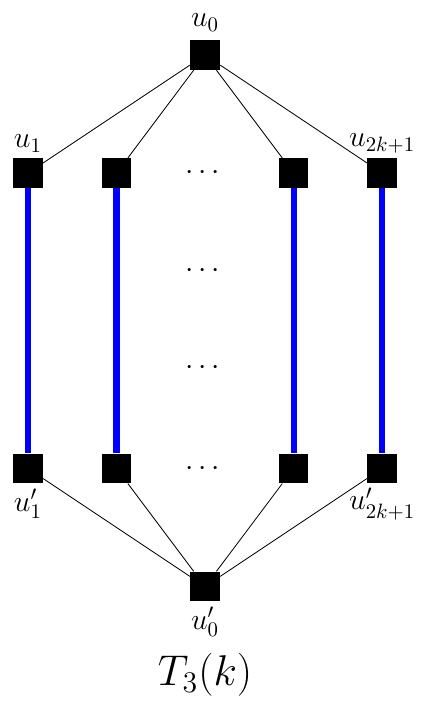} 
\end{center}
\caption{A planar graph that is not \pch{4}{k}}\label{list-1}
\end{figure}

\begin{example}\label{listexample}
Let $T_1(u, u')$ be the left graph in Figure~\ref{list-1}. 
Obtain the graph $T_2(u, u'; k)$ by starting with $16(2k+1)$ copies of $T_1(u, u')$ and identifying all copies of $u$ into a single vertex and identifying all copies of $u'$ into a single vertex.
Obtain the graph $T_3(k)$ by starting with $2k+1$ copies of $T_2(u, u'; k)$ and adding two vertices $u_0$ and $u_0'$ where $u_0$ is  adjacent to all copies of $u$ in $T_2(u, u'; k)$ and $u_0'$ is adjacent to all copies of $u'$ in $T_2(u, u'; k)$. 
See Figure~\ref{list-1}. 
\end{example}

It is easy to check that $T_3(k)$ is a planar graph.
We claim that $T_3(k)$ is not \pch{4}{k}.
In other words, there is a list assignment $L$ with $|L_1(v)| = 4, |L_2(v)| = k$ for every $v \in V(T_3(k))$ such that $T_3(k)$ has no packing $L$-coloring.
Consider the following list assignment $L$ on $T_3(k)$. 
First, every vertex has the same list of $2$-colors. 
We now define the list of $1$-colors for each vertex. 
For a vertex $v\in\{u_0, u'_0\}$, let $L_1(v)$ be an arbitrary set of four colors. 
Note that each vertex $v\not\in\{u_0, u'_0\}$ of $T_3(k)$ is part of a copy of $T_2(u, u';k)$.
Fix an arbitrary copy of $T_2(u, u';k)$.
Let $L_1(u)=\{1,2,3,4\}$ and $L_1(u')=\{5,6,7,8\}$.
For $v\not\in\{u, u'\}$, let $L_1(v)$ be as in the left graph in Figure~\ref{list-1} where for each $(\alpha, \beta)\in \{1,2,3,4\}\times\{5,6,7,8\}$, $(i, j)=(\alpha, \beta)$ holds in exactly $2k+1$ copies of $T_1(u, u')$ within this copy of $T_2(u, u';k)$.
In general, a copy of $T_1(u, u')$ is \emph{$(\alpha, \beta)$-bad} if the corresponding $(i, j)=(\alpha,\beta)$. 
We remark that in an $L$-coloring $\varphi$ of an $(\alpha, \beta)$-bad copy of $T_1(u, u')$, if $(\varphi(u), \varphi(u'))=(\alpha, \beta)$, then there is a vertex $z\not\in\{u, u'\}$ of $T_1(u, u')$ where $\varphi(z)$ is a $2$-color.


Suppose $T_3(k)$ has a packing $L$-coloring $\varphi$.
Since all neighbors of $u_0$ (resp. $u'_0$) have pairwise distance $2$, at least $k+1$ neighbors of $u_0$ (resp. $u'_0$) must be colored with a $1$-color. 
Therefore, there must be a copy of $T_2(u, u'; k)$ for which both $\varphi(u)$ and $\varphi(u')$ are $1$-colors. 
In particular, there are $2k+1$ copies of $T_1(u, u')$ within that copy of $T_2(u, u'; k)$ that are $(\varphi(u), \varphi(u'))$-bad. 
In each such $(\varphi(u), \varphi(u'))$-bad  $T_1(u, u')$, there is a vertex $z\not\in\{u, u'\}$ for which $\varphi(z)$ is a $2$-color. 
Note that $z$ is a neighbor of $u$ or $u'$. 
Since all neighbors of $u$ (resp. $u'$) have pairwise distance $2$, $u$ (resp. $u'$) has at most $k$ neighbors for which $\varphi(z)$ is a $2$-color. 
Therefore, there is an $(\varphi(u), \varphi(u'))$-bad $T_1(u, u')$ where all vertices received a $1$-color by $\varphi$, which is a contradiction. 




\section{Proof of Theorem~\ref{thm:main-theorem-1}}\label{proofmaintheorem}


Let $G_k$ be a graph with no \pc{2}{k}, but every proper subgraph has a \pc{2}{k}.
Assume the $1$-colors are $1_a, 1_b$ and the $2$-colors are $2_1, \ldots, 2_k$.

\subsection{Reducible Configurations}\label{reduceible_configuration}

\begin{lemma}\label{lem:1vx}
No vertex of $G_k$ is a $1$-vertex. 
\end{lemma}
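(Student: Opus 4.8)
The plan is a routine minimality argument, and the key realization is that the two $1$-colors alone already suffice to handle a degree-one vertex, so the $2$-colors play no role here. Suppose toward a contradiction that $G_k$ has a $1$-vertex $v$, and let $u$ denote its unique neighbor. Since $G_k - v$ is a proper subgraph of $G_k$, the minimality of $G_k$ yields a \pc{2}{k} $\varphi$ of $G_k - v$. I would then extend $\varphi$ to $v$ using a $1$-color: the only vertices that could forbid a $1$-color at $v$ are the neighbors of $v$ at distance exactly $1$, and the sole such vertex is $u$. Hence $v$ may receive any $1$-color distinct from $\varphi(u)$, and since the $1$-palette $\{1_a, 1_b\}$ has size $2$, at least one choice survives; set $\varphi(v) \in \{1_a, 1_b\} \setminus \{\varphi(u)\}$.

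Finally I would check that this extension is still a \pc{2}{k} of $G_k$. The $2$-colored classes are untouched, so each remains $2$-independent; and the two $1$-colored classes stay independent because the only edge incident to $v$ is $vu$ and $\varphi(v) \neq \varphi(u)$ by construction. This contradicts the assumption that $G_k$ has no \pc{2}{k}, so $G_k$ has no $1$-vertex. I do not anticipate any real obstacle: the only point worth recording is that deleting a degree-one vertex is reducible regardless of how $G_k - v$ was colored, because a vertex of degree $1$ can never exhaust both $1$-colors. The genuinely harder reductions later on (for $2$-vertices, $3$-vertices, and small configurations) are the ones that will have to exploit the $2$-colors and carefully track which colors are blocked in the distance-$2$ neighborhood; this lemma is just the easy warm-up.
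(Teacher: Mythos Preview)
Your proof is correct and essentially identical to the paper's: both delete the $1$-vertex, color the remainder by minimality, and extend using a $1$-color distinct from that of the unique neighbor. The only difference is cosmetic (you swap the names $u$ and $v$ and add a bit of verification), so there is nothing substantive to compare.
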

\begin{proof}
Suppose $G_k$ has a $1$-vertex $u$, whose unique neighbor is $v$. 
By the minimality of $G_k$, the graph $G_k-u$ has a \pc{2}{k} $\varphi$. 
Extend $\varphi$ to $u$ by coloring $u$ with a  $1$-color that is not $\varphi(v)$. 
This is a \pc{2}{k} of all of $G_k$, which is a contradiction.
\end{proof}

A {\it $t$-thread} is a path on $t$ $2$-vertices. 

\begin{lemma}\label{lem:2thread}
If $u_1u_2$ is a $2$-thread in $G_k$ where $v_i$ is the neighbor of $u_i$ that is not $u_{3-i}$, then both $v_1$ and $v_2$ are $(k+2)^+$-vertices. 
\end{lemma}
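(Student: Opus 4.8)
The plan is to argue by contradiction using the minimality of $G_k$. Suppose $u_1u_2$ is a $2$-thread and, without loss of generality, $v_1$ is a $(k+1)^-$-vertex. Delete $u_1$ and $u_2$ from $G_k$; by minimality, the graph $G_k - \{u_1, u_2\}$ has a \pc{2}{k} $\varphi$. I want to extend $\varphi$ to $u_1$ and $u_2$ to reach a contradiction. The colors forbidden at $u_1$ (respectively $u_2$) come from two sources: the $1$-colors used on $v_1$ (respectively $v_2$), which kills at most one $1$-color each, and the $2$-colors used within distance $2$ of $u_1$ (respectively $u_2$). Since $u_1$ has degree $2$ with neighbors $v_1$ and $u_2$, the vertices within distance $2$ of $u_1$ (other than $u_2$, which is not yet colored) are $v_1$ itself and the other neighbors of $v_1$; as $v_1$ is a $(k+1)^-$-vertex, this is at most $v_1$ together with $k$ other vertices, so at most $k$ vertices at distance exactly $2$ contribute $2$-colors. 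Thus at least one of the two $1$-colors, say $1_a$, is available at $u_1$ unless $\varphi(v_1) = 1_a$ or $1_b$; in any case at least one $1$-color is free at $u_1$ after excluding only $\varphi(v_1)$.

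Here is the subtlety: I must color both $u_1$ and $u_2$, and they are adjacent, so they cannot both get the same $1$-color, and a $2$-color assigned to one constrains the other. The clean way forward is to first color $u_2$ using the minimality more carefully, or to color $u_1$ with a $1$-color distinct from $\varphi(v_1)$ and then color $u_2$. For $u_2$: its neighbors are $v_2$ and $u_1$, and once $u_1$ is colored with a $1$-color $c \in \{1_a, 1_b\}$, the $1$-colors forbidden at $u_2$ are $c$ (from the edge $u_1u_2$) and $\varphi(v_2)$ if that is a $1$-color. If $\varphi(v_2) \ne c$ is a $1$-color, or $\varphi(v_2)$ is a $2$-color, then only the single $1$-color $c$ and possibly one more are forbidden — but there are only two $1$-colors total, so this does not immediately work. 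This is exactly why the hypothesis is that \emph{both} $v_1, v_2$ being $(k+1)^-$ leads to trouble: I should instead use a $2$-color on one of $u_1, u_2$.

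So the refined plan: since $v_1$ is a $(k+1)^-$-vertex, the set of $2$-colors appearing within distance $2$ of $u_1$ has size at most $k$ (counting $v_1$ and its $\le k$ other neighbors, noting $u_2$ is uncolored), hence some $2$-color $2_j$ is available at $u_1$; color $u_1 := 2_j$. Now $u_1$'s color is a $2$-color, so it forbids no $1$-color at $u_2$; the only constraint on $u_2$ from the edge $u_1u_2$ is that $u_2 \ne 2_j$, and the only other constraint is $\varphi(v_2)$. Then color $u_2$ with a $1$-color different from $\varphi(v_2)$ — at least one of $1_a, 1_b$ works. The resulting coloring is a valid \pc{2}{k} of $G_k$ (the $2$-color classes remain $2$-independent because $u_1$'s $2_j$-neighbors within distance $2$ were all avoided, and $u_2$ got a $1$-color), contradicting the choice of $G_k$. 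By symmetry the same argument applies if $v_2$ is the $(k+1)^-$-vertex, so both $v_1$ and $v_2$ must be $(k+2)^+$-vertices.

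The main obstacle is bookkeeping the distance-$2$ neighborhood of $u_1$ precisely: I must be certain that $u_2$ (and the vertices beyond it) do not contribute forbidden $2$-colors when recoloring, which holds because $u_2$ is deleted/uncolored and $u_1$'s only neighbors are $v_1$ and $u_2$, so every vertex at distance $\le 2$ from $u_1$ other than $u_2$ lies in $N[v_1]$, giving the bound $|N[v_1]| - 1 \le (k+1)$ — wait, this gives $k+1$, not $k$, so I will need to be slightly more careful: the $2$-colors within distance $2$ come from $v_1$ and its neighbors other than $u_1$, which is at most $1 + (k)$ vertices when $\deg(v_1) \le k+1$, and if equality is tight I still have $k+1$ potentially distinct $2$-colors but also two $1$-colors, and $v_1$ uses at most one color total; a short case split on whether $\varphi(v_1)$ is a $1$-color or a $2$-color then yields an available color at $u_1$ in every case. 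That case analysis is the part requiring genuine care.
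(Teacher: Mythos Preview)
Your setup matches the paper's: delete $u_1,u_2$, color $G_k-\{u_1,u_2\}$ by minimality, then try to extend. But the extension argument has two genuine problems.

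First, your assertion that ``every vertex at distance $\le 2$ from $u_1$ other than $u_2$ lies in $N[v_1]$'' is false: $v_2$ is at distance $2$ from $u_1$ via the path $u_1u_2v_2$ and need not lie in $N[v_1]$. So when you try to place a $2$-color on $u_1$ you must also avoid $\varphi(v_2)$. (In the paper's proof this turns out to be harmless because by the time one reaches the critical case $\varphi(v_2)$ is already known to be a $1$-color; but you have not established that.)

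Second, and more importantly, the case split you sketch at the end---on whether $\varphi(v_1)$ is a $1$-color or a $2$-color---does not resolve the hard case, which is $\varphi(v_1)=\varphi(v_2)=1_a$. There no assignment of $1$-colors to $\{u_1,u_2\}$ works, so $u_1$ needs a $2$-color; but the $2$-colors forbidden at $u_1$ are those on $N(v_1)\setminus\{u_1\}$, a set of up to $k$ vertices, which can exhaust all $k$ available $2$-colors. The idea you are missing is a \emph{recoloring step}: attempt to recolor $v_1$ with $1_b$. If this succeeds you have reduced to the easy case $\varphi(v_1)\ne\varphi(v_2)$, handled by $\varphi(u_1)=\varphi(v_2)$, $\varphi(u_2)=\varphi(v_1)$. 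If it fails, some neighbor $w$ of $v_1$ already carries $1_b$, so only the at most $k-1$ vertices in $N(v_1)\setminus\{u_1,w\}$ can block $2$-colors at $u_1$, leaving some $2$-color $2_x$ free; now set $\varphi(u_1)=2_x$ and $\varphi(u_2)=1_b$. Without this recoloring trick the count is genuinely off by one and the argument cannot close.
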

\begin{proof}
    Let $u_1u_2$ be a $2$-thread of $G_k$ and let $v_i$ be the neighbor of $u_i$ that is not $u_{3-i}$.
    By the minimality of $G_k$, the graph $G_k-\{u_1, u_2\}$ has a \pc{2}{k} $\varphi$. 
    If $\varphi(v_1)$ is not a $1$-color, then we can extend $\varphi$ to a \pc{2}{k} of all of $G_k$ by greedily coloring $u_2, u_1$ with $1$-colors,
    a contradiction.
    Thus, by symmetry, we may assume both $\varphi(v_1)$ and $\varphi(v_2)$ are $1$-colors. 
    If $\varphi(v_1)\neq \varphi(v_2)$, then we can extend $\varphi$ to a \pc{2}{k} of all of $G_k$ by setting $\varphi(u_1)=\varphi(v_2)$ and $\varphi(u_2)=\varphi(v_1)$,
    a contradiction.
    Thus, $\varphi(v_1)=\varphi(v_2)$, and without loss of generality assume $\varphi(v_1)=1_a$. 
    
    Assume $v_1$ is a $(k+1)^-$-vertex. 
    If $v_1 = v_2$, then there is a $2$-color $2_x\not\in\varphi(N(v_1)-u_1-u_2)$, so we can extend $\varphi$ to a \pc{2}{k} of all of $G_k$ by setting $\varphi(u_1)=1_b$ and $\varphi(u_2)=2_x$, a contradiction. 
    Thus, $v_1 \neq v_2$.
    We must not be able to recolor $v_1$ with $1_b$, so $v_1$ has a neighbor $w$ colored $1_b$. 
    Since $v_1$ has at most $k-1$ neighbors excluding $u_1$ and $w$, there is a $2$-color $2_x$ we can use on $u_1$. 
    Now, extend $\varphi$ to a \pc{2}{k} of all of $G_k$ by coloring $u_1$ with $2_x$ and $u_2$ with $1_b$, a contradiction.    
    Therefore, $v_1$ is a $(k+2)^+$-vertex, and by symmetry $v_2$ is also a $(k+2)^+$-vertex.
\end{proof}

\begin{lemma}\label{lem:all2nb}
If a vertex $u$ of $G_k$ does not have a $3^+$-neighbor, then $u$ is a $(k+2)^+$-vertex. 
\end{lemma}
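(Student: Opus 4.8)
The plan is to argue by contradiction: assume $d := \deg(u) \le k+1$. By \Cref{lem:1vx} the graph $G_k$ has no $1$-vertex, so every neighbour of $u$ is a $2$-vertex; write $N(u) = \{w_1, \dots, w_d\}$ and let $x_i$ be the neighbour of $w_i$ other than $u$. Since $G_k$ is simple, $x_i \neq u$; if $x_i = w_j$ for some $j$, then $w_j$ has neighbours $u$ and $w_i$, so $x_j = w_i$ and $\{w_i,w_j\}$ spans a triangle with $u$ (I call such a $\{w_i,w_j\}$ a \emph{triangle pair}). Pass to the proper subgraph $G_k - N[u]$ and let $\varphi$ be a \pc{2}{k} of it, which exists by the minimality of $G_k$. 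I will repeatedly use the observation that no path of length at most $2$ joining two vertices of $G_k - N[u]$ passes through $N[u]$: the middle vertex of such a path can be neither $u$ (all of whose neighbours lie in $N[u]$) nor any $w_i$ (whose only neighbour outside $N[u]$ is $x_i$). Hence distances up to $2$ among vertices outside $N[u]$ are the same in $G_k$ and in $G_k-N[u]$, which is what makes the verifications below routine: colouring $N[u]$ can only endanger $\varphi$ through a freshly $2$-coloured vertex of $N[u]$ lying within distance $2$ of an old vertex with the same $2$-color.

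The main step is to extend $\varphi$ to $N[u]$, split according to whether $u$ can receive a $2$-color. Let $F = \{\varphi(x_i) : 1\le i\le d,\ x_i\notin N[u],\ \varphi(x_i)\text{ is a }2\text{-color}\}$. First I would dispose of the case $F \subsetneq \{2_1,\dots,2_k\}$: choose $2_t\notin F$, set $\varphi(u)=2_t$, colour each triangle pair $\{w_i,w_j\}$ with $\{1_a,1_b\}$, and colour every remaining $w_i$ (note such a $w_i$ has $x_i\notin N[u]$) with a $1$-color different from $\varphi(x_i)$. Every newly coloured vertex except $u$ gets a $1$-color, so the only new $2$-color obligation is at $u$; the set of vertices at distance at most $2$ from $u$ meets the new vertices only in the (now $1$-coloured) $w_i$'s and the old vertices only in the $x_i$'s, none of which is coloured $2_t$ by the choice of $2_t$. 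The remaining new constraints are adjacency constraints holding by construction, so this gives a \pc{2}{k} of $G_k$, a contradiction.

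It remains to handle $F=\{2_1,\dots,2_k\}$. Then at least $k$ indices $i$ have $x_i\notin N[u]$ with $\varphi(x_i)$ a $2$-color, forcing $k\le d\le k+1$; fix $S$ with $|S|=k$ and $\{\varphi(x_i):i\in S\}=\{2_1,\dots,2_k\}$, leaving at most one index $i_0\notin S$. No $i\in S$ lies in a triangle pair (its $x_i$ is outside $N[u]$), hence $i_0$, if it exists, cannot lie in a triangle pair either (its partner would be some $j\neq i_0$, so $j\in S$), so $x_{i_0}\notin N[u]$ and $\varphi(x_{i_0})$ is defined. I then give $u$ a $1$-color: if there is no $i_0$, or if $\varphi(x_{i_0})$ is a $2$-color, pick any $1$-color $c$ and give every $w_i$ the other $1$-color; if $\varphi(x_{i_0})$ is a $1$-color, set $\varphi(u)=\varphi(x_{i_0})$ and again give every $w_i$ the other $1$-color. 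In every sub-case all of $N[u]$ is $1$-coloured (so no new $2$-color constraint arises), and one checks directly that every edge inside $N[u]$ and every edge $w_ix_i$ is properly coloured; this extends $\varphi$ to a \pc{2}{k} of $G_k$, the final contradiction. Therefore $u$ is a $(k+2)^+$-vertex.

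The crux — and the reason the obvious ``delete $u$ and recolour the leaves $w_i$'' strategy fails — is that an outside neighbour $x_i$ may itself be a high-degree vertex whose neighbourhood already exhausts all $k$ of the $2$-colors, leaving no legal $2$-color for $w_i$. The point of the argument is to aim a $2$-color at $u$ rather than at any $w_i$, and only when that option is globally unavailable — which, as shown, pins $d$ down to $k$ or $k+1$ and makes the configuration around $u$ very rigid — to retreat to a purely $1$-coloured extension of $N[u]$, matching $\varphi(u)$ to the colour of the unique problematic outside neighbour.
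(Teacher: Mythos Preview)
Your proof is correct and follows the paper's strategy of deleting $N[u]$ and extending $\varphi$ first by trying a $2$-color on $u$, then falling back to $1$-colors on all of $N[u]$. The paper shortens your Case~2 by observing that if some $1$-color $1_x$ is also missing among the $x_i$'s then one may colour every $w_i$ with $1_x$ and $u$ with $1_y$, so in fact all $k+2$ colours must appear among the $x_i$'s and $d\ge k+2$ immediately; it also disposes of your ``triangle pairs'' up front via \Cref{lem:2thread} (a $2$-thread $w_iw_j$ would force its common neighbour $u$ to be a $(k+2)^+$-vertex) rather than carrying them through the argument.
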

\begin{proof}
Suppose $G_k$ has a $(k+1)^-$-vertex $u$ with no $3^+$-neighbors.
By \Cref{lem:1vx}, the neighbors $u_1, \ldots, u_d$ of $u$ are all $2$-vertices. 
Note that by \Cref{lem:2thread}, no neighbors of $u$ are adjacent to each other. 

%

Let $v_i$ be the neighbor of $u_i$ that is not $u$. 
By the minimality of $G_k$, the graph $G_k - \{u, u_1, \ldots, u_d\}$ has a \pc{2}{k} $\varphi$. 
If a $2$-color $2_x$ does not appear on $v_1, \ldots, v_d$, then extend $\varphi$ to a \pc{2}{k} of all of $G_k$ by coloring $u$ with $2_x$ and coloring each $u_i$ with a $1$-color that is not $\varphi(v_i)$, a contradiction. 
If a $1$-color $1_x$ does not appear on $v_1, \ldots, v_d$, then extend $\varphi$ to a \pc{2}{k} of all of $G_k$ by coloring $u$ with $1_y$ and coloring each $u_i$ with $1_x$ where $x\neq y$, a contradiction. 
Therefore, all $1$-colors and $2$-colors must appear in $v_1, \ldots, v_d$, so $d\geq k+2$. 
\end{proof}

By \Cref{lem:all2nb}, we have the following corollary.

\begin{cor}\label{cor:no3thread}
There is no $3$-thread in $G_k$. 
\end{cor}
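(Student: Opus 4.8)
The statement to prove is Corollary~\ref{cor:no3thread}: there is no $3$-thread in $G_k$. The plan is to derive this directly from \Cref{lem:all2nb}.

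\medskip

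First I would recall what a $3$-thread is: a path on three consecutive $2$-vertices, say $u_1u_2u_3$. The key observation is that the middle vertex $u_2$ of such a thread is a $2$-vertex whose two neighbors $u_1$ and $u_3$ are themselves $2$-vertices; in particular, $u_2$ has no $3^+$-neighbor. Since $u_2$ is a $2$-vertex, it is certainly a $(k+1)^-$-vertex (as $k \ge 1$, so $k+1 \ge 2$, but we need $2 \le k+1$, which holds; more carefully, a $2$-vertex is a $(k+2)^-$-vertex always, and it fails to be a $(k+2)^+$-vertex precisely because $2 < k+2$). Thus $u_2$ is a vertex with no $3^+$-neighbor that is not a $(k+2)^+$-vertex, directly contradicting \Cref{lem:all2nb}. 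Hence no $3$-thread exists.

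\medskip

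The only point requiring any care is confirming that a $2$-vertex really does contradict the conclusion of \Cref{lem:all2nb}, i.e.\ that a $2$-vertex is not a $(k+2)^+$-vertex. This holds because $k \ge 1$ forces $k+2 \ge 3 > 2$, so a vertex of degree $2$ has degree strictly less than $k+2$. I expect no genuine obstacle here — this corollary is an immediate specialization of the preceding lemma, and the proof is essentially a one-line deduction once one identifies the middle vertex of the thread as the relevant vertex with only $2$-neighbors.
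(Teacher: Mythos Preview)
Your proof is correct and matches the paper's approach exactly: the paper simply states that the corollary follows from \Cref{lem:all2nb} without giving details, and you have supplied precisely the intended one-line argument via the middle vertex of the thread.
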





\begin{lemma}\label{lem:only2threads}
A vertex of $G_k$ cannot be adjacent to only $2$-threads. 
\end{lemma}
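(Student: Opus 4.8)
The statement to prove is Lemma~\ref{lem:only2threads}: a vertex of $G_k$ cannot be adjacent to only $2$-threads. The plan is to argue by contradiction, mirroring the structure of the proof of \Cref{lem:all2nb}. Suppose $u$ is a vertex all of whose incident edges lead into $2$-threads; say the $2$-threads are $u_iu_i'$ for $i\in\{1,\dots,d\}$, where $u_i$ is the neighbor of $u$ and $u_i'$ is the other $2$-vertex of the thread, and let $v_i$ be the neighbor of $u_i'$ distinct from $u_i$. By \Cref{lem:2thread}, each $v_i$ is a $(k+2)^+$-vertex (in particular $v_i \neq u$, since $u$ would then have degree $d$, and we will see $d$ is forced to be large, so this is consistent but must be checked). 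Note also that by \Cref{cor:no3thread} none of the $u_i'$ coincide or are adjacent to each other, and the $u_i$'s are pairwise non-adjacent by \Cref{lem:2thread}. A first thing to pin down is $d \ge 2$: if $d = 1$ then $u$ itself is a $2$-vertex lying on a longer thread, contradicting \Cref{cor:no3thread} (or one handles it directly), so I would dispose of small $d$ at the start.

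The core of the argument is to delete $X := \{u\} \cup \{u_i, u_i' : i \in \{1,\dots,d\}\}$, take a \pc{2}{k} coloring $\varphi$ of $G_k - X$ by minimality, and show it extends. The key observation is that once we decide the color of $u$, each $2$-thread $u u_i u_i' v_i$ is a path of length $3$ whose two endpoints ($u$ and $v_i$) are already colored, so we just need to color the two internal $2$-vertices $u_i, u_i'$; a $2$-vertex on such a path can always take a $1$-color as long as we have two $1$-colors available and avoid the (at most one) forbidding neighbor. Concretely: (i) if we color $u$ with a $2$-color $2_x$, then for each $i$ we color $u_i'$ with a $1$-color $\ne \varphi(v_i)$ and $u_i$ with a $1$-color $\ne \varphi(u_i')$ — this always works, so the only obstruction is that \emph{every} $2$-color is already forbidden for $u$, i.e. every $2_x$ appears on some $v_i$ (the $u_i, u_i'$, $v_i$ that got deleted don't constrain $u$ at distance $\le 1$ except $v_i$ is at distance $3$ from $u$, hence irrelevant for a $2$-color on $u$ which only needs to avoid distance-$\le 2$ conflicts; but the only surviving vertices within distance $2$ of $u$ are... none, since all neighbors of $u$ were deleted) — wait, I need to be careful: a $2$-color on $u$ must avoid being within distance $2$ of another $2_x$-vertex, and after deleting $X$, every vertex within distance $2$ of $u$ in $G_k$ has been deleted (neighbors $u_i$ are deleted, and their other neighbors are $u_i'$, also deleted). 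So actually \emph{any} $2$-color works on $u$, and then the threads extend freely — giving a contradiction immediately! That seems too strong, which means I am missing a constraint.

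Re-examining: the issue must be that $u$ can have \emph{large degree}, and the point of the lemma is more subtle — perhaps $u$ is allowed a $1$-color too and the real content is about degree bounds needed later in discharging, OR the $v_i$'s can coincide with each other or with $u$, creating short cycles that invalidate the naive extension. Actually the genuine subtlety: if two threads share the same far endpoint, $v_i = v_j$, then the path $u_iu_i'v_iu_j'u_j$ has length $4$ and a $2$-color on $u$ plus the $1$-coloring of the threads might force $\varphi(u_i') = \varphi(u_j')$ with these at distance $2$ through $v_i$ — but $1$-colors only need distance $\ge 2$, so distance exactly $2$ is fine. I think the correct reading is that this lemma, like the others, concludes something like ``$u$ must be a $(k+2)^+$-vertex'' or is an intermediate step whose statement in the excerpt is truncated; so my plan is: delete $X$, extend $\varphi$, and show that if $u$ is a $(k+1)^-$-vertex we reach a contradiction by the coloring moves above (color $u$ with a $2$-color, color threads with $1$-colors), handling the recoloring-of-$v_i$ trick from \Cref{lem:2thread} when a $v_i$ blocks things. \textbf{The main obstacle} I anticipate is exactly the bookkeeping when several $v_i$ coincide or when $u = v_i$ for some $i$: these coincidences create short paths/cycles among the deleted vertices and their neighbors, so the independent extension of the $1$-colors on the $d$ threads is no longer automatic, and one must either rule out such coincidences using the girth/$\mad$ hypothesis and earlier lemmas, or check the extension survives them case by case.
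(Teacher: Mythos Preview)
Your first pass is correct and is exactly the paper's proof. After deleting $X=\{u\}\cup\{u_i,u_i':i\in\{1,\dots,d\}\}$ and taking a \pc{2}{k} $\varphi$ of $G_k-X$, every vertex within distance $2$ of $u$ in $G_k$ lies in $X$, so any $2$-color can be put on $u$; then each $u_i'$ gets a $1$-color distinct from $\varphi(v_i)$ and each $u_i$ gets a $1$-color distinct from $\varphi(u_i')$. That is the whole argument, and it is precisely what the paper does (the paper deletes only the thread vertices and recolors the then-isolated $u$, but this is the same proof).

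Your instinct that ``this seems too strong'' is the only error. The lemma statement is not truncated: its conclusion really is that the configuration is forbidden outright, with no degree hypothesis on $u$. It is not a companion to \Cref{lem:all2nb} producing a $(k+2)^+$ bound; it is used in discharging only implicitly (together with \Cref{lem:2thread}) to guarantee that the neighbor $v_3$ in \Cref{lem:two2nbs} is a genuine $3^+$-vertex, and the paper does not even cite it there. The worries you raise afterward---coincidences $v_i=v_j$, $v_i=u$, small $d$---are all harmless for the greedy extension: a $1$-color on $u_i'$ only needs to avoid the single colored neighbor $v_i$, and a $1$-color on $u_i$ only needs to avoid $\varphi(u_i')$, regardless of how the $v_i$'s overlap. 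Drop everything after ``giving a contradiction immediately'' and you have the proof.
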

\begin{proof}
Suppose $G_k$ has a $d$-vertex $u$ that is adjacent to $d$ $2$-threads $v_1v_1', \ldots, v_dv'_d$ where $v_1, \ldots, v_d$ are neighbors of $u$.
By the minimality of $G_k$, the graph $G_k - \{v_1,v_1',\ldots, v_d,v_d'\}$ has a \pc{2}{k} $\varphi$. 
We can extend $\varphi$ to a \pc{2}{k} of all of $G_k$ by coloring $u$ with a $2$-color and greedily coloring $v_1',v_1,v_2',v_2, \ldots, v_d', v_d$ with $1$-colors,  a contradiction.
\end{proof}

\begin{lemma}\label{lem:two2nbs}
    Let $u$ be a $3$-vertex of $G_k$ with neighbors $u_1, u_2, v_3$. 
    If $u_1, u_2$ are $2$-vertices where $v_i$ is the neighbor of $u_i$ that is not $u$, then $v_3$ is a $(k+1)^+$-vertex and at least one of $v_1,v_2,v_3$ is a $(k+2)^+$-vertex. 
    
\end{lemma}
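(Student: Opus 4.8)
The plan is the standard reducibility argument. By minimality, $G_k-\{u,u_1,u_2\}$ has a \pc{2}{k}, say $\varphi$; write $A=\varphi(v_1)$, $B=\varphi(v_2)$, $C=\varphi(v_3)$, let $N'(\cdot)$ denote neighborhoods inside $G_k-\{u,u_1,u_2\}$, and recall the $2$-colors are $2_1,\dots,2_k$. I want to show that if $v_3$ is a $k^-$-vertex, or if all of $v_1,v_2,v_3$ are $(k+1)^-$-vertices, then $\varphi$ extends to $u,u_1,u_2$, contradicting the choice of $G_k$. As preliminaries I would note $v_1,v_2\neq u$, and that $u,u_1,u_2$ form no triangle: in a triangle $uu_1u_2$ the $2$-vertices $u_1,u_2$ see only vertices of the triangle, so $\varphi$ extends by a short direct computation (recoloring $v_3$ when needed), hence $u_1\not\sim u_2$. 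Coincidences $v_1=v_2$, $v_1=v_3$, $v_2=v_3$ are permitted and absorbed into the case analysis below.

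The argument runs on three moves. Move~1: if some $2$-color avoids $\{A,B,C\}\cup\varphi(N'(v_3))$, place it on $u$ and then $1$-colors on $u_1$ and $u_2$ avoiding $A$ and $B$ respectively; as $u_1\not\sim u_2$ this completes. Move~2: if $C$ is a $1$-color, place the other $1$-color on $u$; then $u_i$ takes a $1$-color unless $\varphi(v_i)$ equals it, in which case $u_i$ needs a $2$-color whose only obstruction is the at most $|N'(v_i)|\le\deg(v_i)-1$ colors on $N'(v_i)$. The recoloring move: when $C$ is a $1$-color, if $v_3$ has no neighbor of the other $1$-color in $G_k-\{u,u_1,u_2\}$, recolor $v_3$ with it; when $C$ is a $2$-color and $\deg(v_3)\le k+1$, then $v_3$ has at most $k$ neighbors there carrying at most $k$ colors, and since two neighbors of $v_3$ cannot share a $2$-color one checks they cannot occupy all $k$ $2$-colors, so $v_3$ can be recolored to a free $1$-color. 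First I would dispose of the case $C$ a $2$-color: after this recoloring, re-running Move~1 either extends $\varphi$ or forces $\deg(v_3)\ge k+2$, in which case both conclusions hold. So assume $C$ is a $1$-color. Then Move~2 fails only if $\varphi(v_1)$ or $\varphi(v_2)$ equals $C$ and the forced $u_i$ admits no $2$-color, which needs $\deg(v_i)\ge k+1$; and Move~1 fails only if all $k$ $2$-colors lie in $\{\varphi(v_1),\varphi(v_2)\}\cup\varphi(N'(v_3))$. Feeding in the recoloring dichotomy for $v_3$ (when $v_3$ does have a neighbor of the $1$-color other than $C$, that neighbor fills a slot of $N'(v_3)$ without contributing a $2$-color; otherwise recolor $v_3$ and re-run Move~1/Move~2) sharpens the crude bound $k\le\deg(v_3)+2$ to $\deg(v_3)\ge k+1$, the first conclusion. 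Applying the recoloring move once more to whichever of $v_1,v_2,v_3$ is tight at degree exactly $k+1$ — legitimate precisely because in that tight case its neighbors already realize all $k$ $2$-colors, hence neither $1$-color — then yields a contradiction unless one of $v_1,v_2,v_3$ has degree at least $k+2$, the second conclusion.

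The routine part is translating ``color $c$ is available'' into the appropriate distance-at-most-$2$ count. The real work, and the part I would write out most carefully, is the bookkeeping: one must split on whether each of $A,B,C$ is a $1$-color or a $2$-color and on which (if any) of $v_1,v_2,v_3$ coincide, and in every branch verify two things — that the proposed recoloring of $v_3$ (or of $v_1$, $v_2$) creates no two adjacent vertices with the same $1$-color, and that it actually liberates the $2$-color one wants on $u$ or on the forced $u_i$. Keeping the counts tight enough to land on $k+1$ rather than $k$, and on $k+2$ for one of $v_1,v_2,v_3$, is exactly where the recoloring observations carry the weight, so that is where I expect the main obstacle to lie.
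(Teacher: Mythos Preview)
Your approach---delete $\{u,u_1,u_2\}$, try to place a $2$-color on $u$ (your Move~1) or else $1$-colors greedily (your Move~2), and recolor $v_3$ when stuck---is the same as the paper's. But there is a genuine gap in your treatment of the case $C=\varphi(v_3)$ a $2$-color.

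You claim that when $C$ is a $2$-color and $\deg(v_3)\le k+1$, the fact that neighbors of $v_3$ cannot repeat a $2$-color forces a free $1$-color on $v_3$. That inference is wrong: take $k=3$, $\deg(v_3)=4$, $\varphi(v_3)=2_1$, and give the three neighbors of $v_3$ in $G_k-\{u,u_1,u_2\}$ the colors $1_a,1_b,2_2$. No $1$-color is available for $v_3$, yet $\deg(v_3)=k+1$. If in addition $A=2_3$, then Move~1 fails (all three $2$-colors are blocked), Move~2 does not apply (you defined it only for $C$ a $1$-color), and you are stuck---even though here one can simply put $u=1_a$, $u_1=1_b$, $u_2=1_b$ (or the symmetric choice) once you notice that two of $A,B,C$ are $2$-colors.

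This is exactly what the paper does first: it establishes as observation~\textbf{(1)} that at most one of $\varphi(v_1),\varphi(v_2),\varphi(v_3)$ can be a $2$-color, since otherwise $u,u_1,u_2$ are greedily $1$-colorable. With \textbf{(1)} in hand, whenever $C$ is a $2$-color both $A,B$ are $1$-colors; then Move~1 failing forces all $k-1$ remaining $2$-colors onto $N'(v_3)$, and if the recoloring of $v_3$ also fails both $1$-colors sit there too, giving $|N'(v_3)|\ge k+1$ and $\deg(v_3)\ge k+2$ as desired. Your sketch is missing precisely this preliminary observation, and without it your count in the $2$-color case is off by two. A smaller but related issue: when $A=B=C$ are all the same $1$-color, both $u_1,u_2$ are forced to $2$-colors and must be distinct (they are at distance~$2$), so ``Move~2 fails only if $\deg(v_i)\ge k+1$'' needs more care---the paper handles this by recoloring $v_3$ to the other $1$-color first, which your dichotomy does cover, but you should make the order of operations explicit.
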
    


\begin{proof}
By the minimality of $G_k$, the graph $G_k-\{u, u_1, u_2\}$ has a \pc{2}{k} $\varphi$. 
We first consider the case where $u_1$ is adjacent to $u_2$. 
By \Cref{lem:2thread}, $u$ is a $(k+2)^+$-vertex, so $k=1$. 
If $\varphi(v_3)$ is a $2$-color and $v_3$ cannot be recolored with a $1$-color, then $v_3$  has two neighbors colored with each of the $1$-colors, so  $d(v_3) \ge 3= k+2$.
Otherwise, recolor $v_3$ with a $1$-color if $\varphi(v_3)$ is not already a $1$-color.
Now, we can extend $\varphi$ to a \pc{2}{k} of all of $G_k$ by coloring $u_1$ with a $2$-color and greedily coloring $u, u_2$ with $1$-colors, a contradiction.

 Now we may assume $u_1u_2$ is not an edge. 
 If at least two of $\varphi(v_1),\varphi(v_2),\varphi(v_3)$ are  $2$-colors, then we can extend $\varphi$ to a \pc{2}{k} of all of $G_k$ by greedily coloring $u,u_1,u_2$ with $1$-colors, a contradiction. 
 Hence, we may assume at most one of $\varphi(v_1),\varphi(v_2), \varphi(v_3)$ is a $2$-color.~\textbf{(1)} 

If we can extend $\varphi$ to $u$ with a $2$-color, then we can greedily color $u_1, u_2$ with $1$-colors to obtain a \pc{2}{k} of all of $G_k$, which is a contradiction. 
Therefore, all $2$-colors must appear on $(N[v_3] - u) \cup \{v_1,v_2\}$.~\textbf{(2)}

    Suppose $v_3$ is a $k^-$-vertex.
    If $\varphi(v_3)$ is a $2$-color, say $2_k$, then by~\textbf{(1)} and~\textbf{(2)}, $\varphi(v_1), \varphi(v_2)$ are both $1$-colors and all of $2_1, \ldots, 2_{k-1}$ must appear on $N(v_3)-u$. 
    Furthermore, $\{1_a, 1_b\} \subseteq N(v_3)-u$ since otherwise we can recolor $v_3$ with a $1$-color and color $u$ with $2_k$, so $v_3$ has at least $k+2$ neighbors, which is a contradiction.  
    If $\varphi(v_3)$ is a $1$-color, say $1_a$, then by~\textbf{(1)} and~\textbf{(2)}, exactly one of $\varphi(v_1), \varphi(v_2)$ is a $2$-color, say $\varphi(v_1) = 2_k$, and $\varphi(N(v_3)-u)=\{2_1, \ldots, 2_{k-1}\}$.
    If $\varphi(v_2)=1_b$, then we can extend $\varphi$ to a \pc{2}{k} of all of $G_k$ by greedily coloring $u, u_1, u_2$ with $1$-colors, a contradiction.
    Otherwise, $\varphi(v_2)=1_a$.
    Now, uncolor $v_3$, and extend $\varphi$ to a \pc{2}{k} of all of $G_k$ by greedily coloring $u, u_1, u_2, v_3$ with $1$-colors,  a contradiction.

    Suppose $v_1,v_2,v_3$ are all $(k+1)^-$-vertices.
    By~\textbf{(1)}, there are two cases to consider. 

    \textbf{Case 1:} None of $\varphi(v_1),\varphi(v_2), \varphi(v_3)$ is a $2$-color. 
    Since $d(v_3)\leq k+1$, by~\textbf{(2)}, $\varphi(N(v_3)-u)=\{2_1, \ldots, 2_{k}\}$.
    Thus, $v_3$ can be (re)colored to a $1$-color of our choice.
    Therefore, the only case where we cannot extend $\varphi$ to all of $G_k$ is when $\varphi(v_1) \neq \varphi(v_2)$.
    If $\varphi$ can be extended to $u_1$ with a $2$-color, then we can further extend $\varphi$ to a \pc{2}{k} of all of $G_k$ by greedily coloring $u_2, u, v_3$ with $1$-colors, a contradiction.
    Since $d(v_1)\leq k+1$, $\varphi(N(v_1)-u_1)=\{2_1, \ldots, 2_k\}$, so we can recolor $v_1$ to a $1$-color of our choice. 
    Therefore, we can extend $\varphi$ to a \pc{2}{k} of all of $G_k$ by greedily coloring $u_2,u,v_2,u_1,v_1$ with $1$-colors, a contradiction.

    \textbf{Case 2:} Exactly one of $\varphi(v_1),\varphi(v_2), \varphi(v_3)$ is a $2$-color. 
    By symmetry, we have the following two subcases.

    \textbf{Case 2.1:} $\varphi(v_3)$ is a $2$-color. 
    If we can recolor $v_3$ with a $1$-color, then we are done by Case 1, so $\{1_a, 1_b\}\subseteq\varphi(N(v_3) - u)$. 
    Since $d(v_3) \le k+1$, we can extend $\varphi$ to $u$ with a $2$-color, which is a contradiction to~\textbf{(2)}.

    \textbf{Case 2.2:} $\varphi(v_1)$ is a $2$-color. 
    If $\varphi(v_2) \neq \varphi(v_3)$, then we can extend $\varphi$ to a \pc{2}{k} of all of $G_k$ by greedily coloring $u_2,u,u_1$ with $1$-colors, a contradiction.
    Otherwise, $\varphi(v_2) = \varphi(v_3)$, say $1_a$. 
    If $\varphi$ can be extended to $u_2$ with a $2$-color, then we can further extend $\varphi$ to a \pc{2}{k} of all of $G_k$ by greedily coloring $u,u_1$ with $1$-colors, a contradiction.
    Otherwise, all $2$-colors appear in $\varphi(N(v_2) - u_2)$.
    However, since $d(v_2)\leq k+1$, we can recolor $v_2$ with $1_b$, and color $u_2,u,u_1$ with $1_a, 1_b, 1_a$, respectively, to obtain a \pc{2}{k} of all of $G_k$, which is a contradiction. 
\end{proof}

\begin{lemma}\label{lem:one2nb}
Let $u$ be a $3$-vertex of $G_k$ with neighbors $u_1, u_2, u_3$. 
If $u_1$ is a $2$-vertex, then $d(u_2)+d(u_3)\geq k+1$. 
In particular, $u$ has a $\ceil{\frac{k+1}{2}}^+$-neighbor. 
\end{lemma}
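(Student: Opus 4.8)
The plan is to argue by contradiction using the minimality of $G_k$ for a two-vertex deletion. Assume $d(u_2)+d(u_3)\le k$, let $v_1$ be the neighbor of $u_1$ other than $u$, and (since $u_1$ is a $2$-vertex) note that $u_1$ has exactly the two neighbors $u$ and $v_1$. By minimality, the proper subgraph $G_k-\{u,u_1\}$ has a \pc{2}{k} $\varphi$, and the whole argument is an analysis of how to extend $\varphi$ to $u_1$ and $u$ so as to obtain a \pc{2}{k} of $G_k$, a contradiction.

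The governing quantity is the set $T$ of vertices of $G_k-\{u,u_1\}$ that lie at distance at most $2$ from $u$ in $G_k$. Since the only deleted vertices are $u$ and $u_1$, one has $T\subseteq\{u_2,u_3,v_1\}\cup(N(u_2)\setminus\{u\})\cup(N(u_3)\setminus\{u\})$, and therefore $|T|\le 2+(d(u_2)-1)+(d(u_3)-1)+1=d(u_2)+d(u_3)+1\le k+1$; moreover $u_2,u_3,v_1\in T$, and the bound only improves if some of $u_2,u_3,v_1$ coincide or are adjacent. First, if some $2$-color $2_x$ is missing from $\varphi(T)$, color $u$ with $2_x$ and $u_1$ with a $1$-color different from $\varphi(v_1)$; since every vertex within distance $2$ of $u$ lies in $T\cup\{u_1\}$ and $u_1$ is $1$-colored, this is a \pc{2}{k} of $G_k$. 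So assume every $2$-color occurs in $\varphi(T)$; as $|T|\le k+1$, at most one vertex of $T$ carries a $1$-color.

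Now I split on $\varphi(v_1)$. If $\varphi(v_1)$ is a $2$-color, then at most one of $\varphi(u_2),\varphi(u_3)$ is a $1$-color, so I can give $u$ a $1$-color avoiding both and give $u_1$ the other $1$-color, which automatically differs from $\varphi(v_1)$; this respects all edges at $u$ and at $u_1$. If $\varphi(v_1)$ is a $1$-color, then it is the unique $1$-colored vertex of $T$, so $\varphi(u_2)$ and $\varphi(u_3)$ are both $2$-colors; in particular $v_1\notin\{u_2,u_3\}$ (their colors differ), hence $u$ is not adjacent to $v_1$, so I may color $u$ with $\varphi(v_1)$ and $u_1$ with the other $1$-color. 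In every case $\varphi$ extends to a \pc{2}{k} of $G_k$, a contradiction, so $d(u_2)+d(u_3)\ge k+1$. The ``in particular'' assertion is then immediate: $d(u_2)+d(u_3)\ge k+1$ forces $\max\{d(u_2),d(u_3)\}\ge\ceil{\frac{k+1}{2}}$.

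The main thing to be careful about is the bookkeeping around degeneracies rather than any real difficulty: one must check that $|T|\le k+1$ still holds when $u$ or $u_1$ lies on a triangle (so $v_1\in\{u_2,u_3\}$ or $u_2u_3\in E(G_k)$, cases which merely shrink $T$), and, in the last subcase, that ``$\varphi(v_1)$ a $1$-color while $\varphi(u_2),\varphi(u_3)$ are $2$-colors'' indeed rules out $v_1=u_2$ and $v_1=u_3$, so that setting $\varphi(u)=\varphi(v_1)$ is consistent with the edges incident to $u$. All of these checks follow directly from the case division above with no computation.
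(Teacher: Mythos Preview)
Your proof is correct. Both arguments rest on the same count (the at most $d(u_2)+d(u_3)-1\le k-1$ vertices in $(N[u_2]\cup N[u_3])\setminus\{u\}$ other than $v_1$ cannot exhaust all $2$-colors once one accounts for the $1$-colored vertices), but the setup differs: the paper deletes only $u_1$, keeps the pre-assigned color $\varphi(u)$, and in each case either extends directly to $u_1$ or first \emph{recolors} $u$ (with a $1$-color or with a spare $2$-color) before extending. You instead delete $\{u,u_1\}$ and color both from scratch; your global bound $|T|\le k+1$ then makes the case split on $\varphi(v_1)$ very short, with no recoloring step needed. The trade-off is that the paper's one-vertex deletion matches the style of the surrounding lemmas more closely, while your two-vertex deletion packages the counting more transparently and avoids the slightly fiddly ``cannot recolor $u$, hence $1_a\in\varphi(\{u_2,u_3\})$'' reasoning.
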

\begin{proof}
Suppose $d(u_2)+d(u_3)\leq k$. 
Let $v_1$ be the neighbor of $u_1$ that is not $u$. 
By the minimality of $G$, the graph $G - u_1$ has a \pc{2}{k} $\varphi$. 
Assume $\varphi(v_1)$ is a $1$-color, say $1_a$.
If $\varphi(u)=1_a$ or $\varphi(u)$ is a $2$-color, then we can extend $\varphi$ to a \pc{2}{k} of all of $G_k$ by coloring $u_1$ with $1_b$, a contradiction. 
Thus, $\varphi(u)=1_b$, and we cannot recolor $u$ with $1_a$, so $1_a\in \varphi(\{u_2, u_3\})$. 
Now, we can recolor $u$ with a $2$-color since there are at most $|N[u_2]\cup N[u_3]-\{u\}|-1\leq k-1$ $2$-colors for $u$ to avoid. 
Now, extending $\varphi$ to $u_1$ by coloring $u_1$ with $1_b$ is a \pc{2}{k} of all of $G_k$, which is a contradiction. 

Now assume $\varphi(v_1)$ is a $2$-color, say $2_k$. 
If $\varphi(u)\neq 2_k$, then we can extend $\varphi$ to a \pc{2}{k} of all of $G_k$ by greedily coloring $u_1$ with a $1$-color,  a contradiction. 
Thus, $\varphi(u)=2_k$, and we cannot recolor $u$ with a $1$-color, so $\{1_a, 1_b\}\subseteq \varphi(\{u_2,u_3\})$. 
Now, we can recolor $u$ with a $2$-color that is not $2_k$ since there are at most $|N[u_2]\cup N[u_3]-\{u\}|-2+1\leq k-1$ $2$-colors for $u$ to avoid. 
Now, extending $\varphi$ to $u_1$ by coloring $u_1$ with a $1$-color is a \pc{2}{k} of all of $G_k$, which is a contradiction. 
\end{proof}

\subsection{Discharging}\label{discharging}

Let $G_{12}$ be a counterexample to Theorem~\ref{thm:main-theorem-1} with the minimum number of vertices. 
We use the discharging method to obtain a contradiction. 
For each vertex $v$, let the initial charge $ch(v)$ be $d(v)$. 
We establish a list of discharging rules to redistribute the charge so that every vertex has final charge at least $\frac{14}{5}$. 
This contradicts that $G_{12}$ has maximum average degree less than $\frac{14}{5}$. 
The discharging rules are as below:

\begin{enumerate}[\bf {Rule }1:]
    \item\label{12R:high} Every $14^+$-vertex sends charge $\frac{4}{5}$ to each neighbor.
    \item\label{12R:mid} For $d\in\{5, \ldots, 13\}$, every $d$-vertex sends charge $\frac{2}{5}$ to each neighbor.
    \item\label{12R:4} Every $4$-vertex sends charge $\frac{2}{5}$ to each $2$-neighbor.
    \item\label{12R:3} Every $3$-vertex sends charge $\frac{2}{5}$ to each $2$-neighbor whose other neighbor is a $13^-$-vertex.
\end{enumerate}

Let $u$ be a $j$-vertex of $G_{12}$. 
If $j\geq 14$, then by Rule~\ref{12R:high}, the final charge of $u$ is at least $j - \frac{4}{5}j = \frac{1}{5}j \ge \frac{14}{5}$. 
If $j\in\{5, \ldots, 13\}$, then by Rule~\ref{12R:mid}, the final charge of $u$ is at least $j - \frac{2}{5}j = \frac{3}{5}j \ge 3$. 
If $j=4$, then $u$ has a $3^+$-neighbor by Lemma~\ref{lem:all2nb}, so by Rule~\ref{12R:4}, the final charge of $u$ is at least $4 - 3 \cdot \frac{2}{5} = \frac{14}{5}$. 
If $j=3$, then $u$ has a $3^+$-neighbor by Lemma~\ref{lem:all2nb}, so $u$ has at most two $2$-neighbors. 
If $u$ has no $2$-neighbors, then its final charge is $3$ since the charge does not change. 
If $u$ has exactly one $2$-neighbor, then by Lemma~\ref{lem:one2nb}, $u$ has a $7^+$-neighbor who sends charge at least $\frac{2}{5}$ to $u$, and $u$ sends charge at most $\frac{2}{5}$ to the $2$-neighbor of $u$. 
Therefore, the final charge of $u$ is at least $3$. 
Now assume $u$ has exactly two $2$-neighbors $u_1, u_2$, where $v_i$ is the neighbor of $u_i$ that is not $u$. 
Let the third neighbor of $u$ be $v_3$.
By Lemma~\ref{lem:two2nbs}, $v_3$ is a $13^+$-vertex. 
If $v_3$ is a $14^+$-vertex, then $v_3$ sends charge $\frac{4}{5}$ to $u$ by Rule~\ref{12R:high} and $u$ sends charge at most $2 \cdot \frac{2}{5}$ to its $2$-neighbors by Rule~\ref{12R:3}, so $u$ has final charge at least $3$. 
Otherwise, $v_3$ is a $13$-vertex, and Lemma~\ref{lem:two2nbs} further implies at least one of $v_1$ and $v_2$ is a $14^+$-vertex, say $v_1$.
Now, $v_3$ sends charge $\frac{2}{5}$ to $u$ by Rule~\ref{12R:mid} and $u$ sends charge at most $\frac{2}{5}$ to $u_2$, so the final charge of $u$ is at least $3$.  

If $j=2$, then let $u_1$ and $u_2$ be the neighbors of $u$.
If $u$ has a $14^+$-neighbor $u_i$, then $u_i$ sends charge $\frac{4}{5}$ to $u$ by Rule~\ref{12R:high} and $u$ sends no charge, so the final charge of $u$ is at least $2+\frac{4}{5}=\frac{14}{5}$.
Thus, we may assume $u_1, u_2$ are $13^-$-vertices. 
Moreover, if $u_i$ is a $2$-vertex, then $u_{3-i}$ is a $14^+$-vertex by Lemma~\ref{lem:2thread}, so we may assume each $u_i$ is a $d$-vertex where $d\in\{3, \ldots, 13\}$. 
Now, each $u_i$ sends charge $\frac{2}{5}$ to $u$ by Rules~\ref{12R:mid},~\ref{12R:4},~\ref{12R:3}, so the final charge of $u$ is  $2+\frac{4}{5}$. 
Therefore, every vertex has final charge at least $\frac{14}{5}$.

\section{List version of packing $S$-coloring}\label{list}

In this section, we extend two results of Gastineau and Togni~\cite{GT1} stating that every subcubic graph has both a \pc{1}{6} and a \pc{2}{3}. 
Note that our proof method is different from the proofs of Gastineau and Togni~\cite{GT1}. 


\begin{figure}[ht]
\begin{center}
  \includegraphics[scale=0.55]{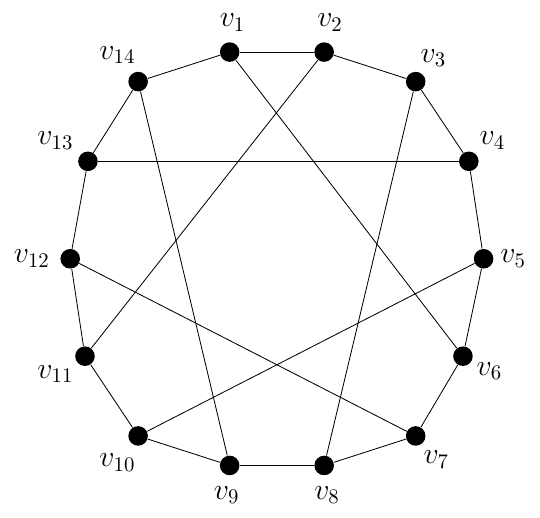} \hspace{3mm}
  \includegraphics[scale=0.55]{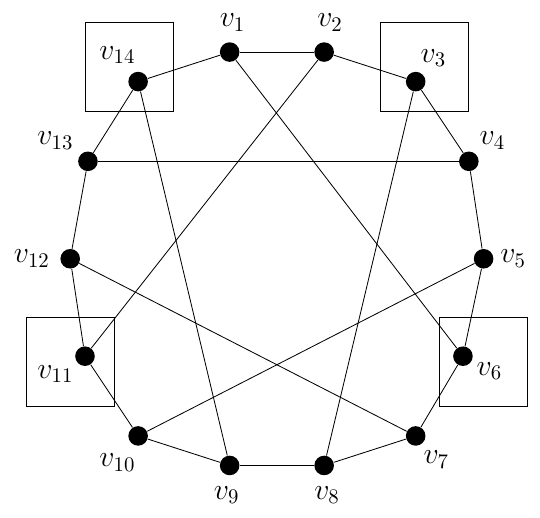} \hspace{3mm}
  \includegraphics[scale=0.55]{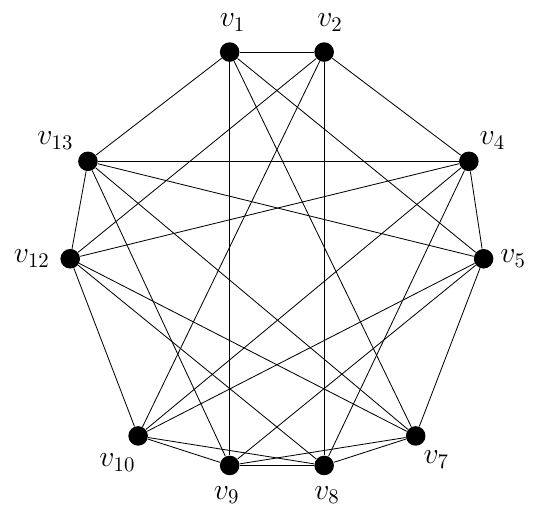} 
\caption{The Heawood graph $H$ on the left, and the graph $H'$ on the right.}\label{heawood}
\end{center}
\vspace{-8mm}
\end{figure}

We first show that the Heawood graph 
$H$ is \pch{1}{6}. 
We use the labels in \Cref{heawood}.
To see this, let $I=\{v_3, v_6, v_{11}, v_{14}\}$, and let $H'$ be the graph obtained from the square of $H$ and removing the vertices in $I$. 
Since $I$ is an independent set in $H$, for each vertex $v$ in $I$ we can use the $1$-color of $v$ on $v$. 
Since $H'$ is $5$-degenerate (realized by say, $v_4, v_5, v_7, v_8, v_9, v_{10}, v_{12}, v_{13}, v_1, v_2$), $H'$ is $6$-choosable, so $H$ is \pch{1}{6}. 

\begin{thm}\label{listthm1}
Every subcubic graph is \pch{1}{6}.
\end{thm}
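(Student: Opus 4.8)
The plan is to argue by contradiction via a minimal counterexample, combining a discharging/reducibility-style analysis with the single-graph base case already handled in the text. Let $G$ be a subcubic graph with a packing $\mathcal L_7$-list assignment $L$ (one class $L_1$ of $1$-colors with $|L(v)\cap L_1|\ge 6$, six classes $L_2,\dots,L_7$ of $2$-colors each with $|L(v)\cap L_j|\ge 1$) admitting no packing $L$-coloring, and suppose $G$ has the fewest vertices (then edges) among all such pairs. The first batch of steps establishes structural restrictions on $G$: it is connected, has minimum degree at least $2$ (a vertex of degree $\le 1$ is removed, colored last with a $1$-color avoiding its $\le 1$ forbidden $1$-neighbor; $6>1$), has no two adjacent $2$-vertices forming long threads (a $t$-thread with $t\ge 2$ can be deleted and its vertices recolored greedily with $1$-colors, since each has at most two neighbors already colored and within distance-$1$ so at most two forbidden $1$-colors, and $6>2$), and in fact that $2$-vertices are scarce — roughly, each $2$-vertex has two $3$-neighbors and $3$-vertices carry at most one $2$-neighbor under suitable conditions. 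These are the list analogues of Lemmas~\ref{lem:1vx}--\ref{lem:one2nb}, and the key point is that with six available $1$-colors a vertex of degree $3$ sees at most $3$ neighbors but, crucially, at most a bounded number of ``conflicting'' colors, so greedy extension almost always works.

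The heart of the argument is then a reduction to the cubic case on a graph with girth constraints, where the $6$ $1$-colors act like a palette that kills all the ``easy'' configurations; what remains should be handled either by a short discharging argument showing no subcubic graph can avoid all reducible configurations, or — following the structure of the excerpt — by reducing the extremal obstruction to a finite list of dense subcubic graphs and checking them. Concretely, after the reductions $G$ is cubic (or very close to it) with girth at least some small value, and the remaining worry is a graph all of whose local neighborhoods look like the Heawood graph or the Petersen graph. The excerpt has already shown the Heawood graph $H$ is \pch{1}{6} by exhibiting an independent set $I=\{v_3,v_6,v_{11},v_{14}\}$ whose vertices take their own $1$-color and observing $H^2-I$ is $5$-degenerate, hence $6$-choosable; I would use exactly this ``independent transversal plus degeneracy of the punctured square'' technique as the recurring tool. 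For a general cubic $G$ surviving the reductions, the plan is: pick a small independent set $I$ in $G$ that hits every short cycle / dense region, color each $v\in I$ with a $1$-color (allowed since $I$ is independent and we have $6\ge 1$ spare), and then show $G^2-I$ is $5$-degenerate; a cubic graph has $|E(G^2)|$ controlled by $|V|\cdot\binom{3}{2}$-type bounds, and removing a well-chosen linear-size $I$ drops the average degree of $G^2-I$ below $6$, while locally $5$-degeneracy follows because every vertex of $G^2-I$ loses at least one distance-$\le 2$ partner to $I$.

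The main obstacle I expect is precisely constructing the independent set $I$ so that $G^2 - I$ is globally $5$-degenerate rather than merely of small average degree: $5$-degeneracy is a hereditary, everywhere-local condition, and in a cubic graph a vertex has up to $2+2+2=6$ vertices at distance exactly $2$ plus $3$ at distance $1$, so a priori $\deg_{G^2}(v)$ can be as large as $9$; to get it down to $5$ we must guarantee $I$ removes at least $4$ of those from around every vertex, which forces $I$ to be fairly ``spread out yet dense.'' I would handle this by choosing $I$ greedily/via a Haxell-type independent-transversal argument, or — most likely, matching the paper's taste — by first using the reducibility lemmas to force girth large enough (girth $\ge 5$ or $6$) that the distance-$2$ neighborhoods are genuinely $6$ vertices of controlled structure, then taking $I$ to be a maximal independent set so that every $v\notin I$ has a neighbor in $I$ (hence loses $\ge 1+2$ vertices from $N_{G^2}[v]$) and separately arranging, via a second small deletion or an orientation argument, to shave off the last conflict. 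The leftover sporadic graphs (if the girth cannot be pushed up, i.e. the Petersen/Heawood-like cases) are then dispatched by hand exactly as the Heawood computation above, and the Petersen graph is not subcubic-exceptional here because $6>5$ already beats its square-coloring obstruction.
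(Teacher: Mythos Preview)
You have misread the notation $(1^{1},2^{6})$: it means each vertex has \emph{one} $1$-color and \emph{six} $2$-colors in its list, not six $1$-colors. With only a single $1$-color per vertex, every one of your reducibility steps collapses: you cannot delete a $1$-vertex and ``color it last with a $1$-color avoiding its neighbor'' (there is nothing to avoid with, you have exactly one $1$-color), you cannot greedily $1$-color a $2$-thread, and the remark ``$6>3$ so greedy extension almost always works'' is simply false in the correct setting. Lemmas~\ref{lem:1vx}--\ref{lem:one2nb} are about a different theorem (Theorem~\ref{thm:main-theorem-1}, where two $1$-colors are available) and have no analogue here.

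Your high-level instinct --- color an independent set $I$ with $1$-colors and then list-color $G^{2}-I$ from the $2$-color lists --- is in fact the paper's approach, but the details are quite different from what you propose. The paper takes $I$ to be a \emph{maximum} independent set (not a ``small'' one hitting short cycles); maximality guarantees that every vertex outside $I$ has a neighbor in $I$, and one then checks directly that $G^{2}-I$ has maximum degree at most $6$. One then invokes the list version of Brooks's theorem rather than $5$-degeneracy, which forces a case analysis ruling out a $K_{7}$ component of $G^{2}-I$; that analysis is where the Petersen graph (eliminated because its maximum independent set has size $4$, contradicting maximality of $I$) and the Heawood graph (handled by the explicit computation you quoted) actually enter. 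Your plan to push girth up via reductions and then find $I$ making $G^{2}-I$ $5$-degenerate is not what happens and, given the misreading, would not get off the ground.
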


\begin{proof}
Let $G$ be a connected subcubic graph and let $L$ be a  list assignment such that $|L_1(v)| = 1$ and $|L_2(v)| = 6$ for each $v \in V(G)$. 
We may assume $G$ is cubic since every connected subcubic graph is a subgraph of a connected cubic graph. 
Since we already showed the Heawood graph is \pch{1}{6}, we assume $G$ is not the Heawood graph. 
Take a maximum independent set $I$ of $G$. 
For each $v \in I$, we color $v$ with the $1$-color from $L_1(v)$. 
Let $G'$ be the graph obtained from the square of $G$ and removing the vertices in $I$. 
We remark that every vertex of $G'$ is adjacent to a vertex in $I$ in $G$. 

We claim that the maximum degree of $G'$ is at most $6$. 
Indeed, let $u$ be a vertex of $G'$, so $u\not\in I$. 
For each neighbor $u_i$ of $u$ in $G$, 
some vertex in $N_G[u_i]-\{u\}$ is in $I$. 
Thus, $d_{G'}(u)\leq 6$. 



By Vizing's Theorem~\cite{V1} (list version of Brooks's Theorem~\cite{1941Brooks}), $G'$ is $6$-choosable unless a component of $G'$ is $K_7$. 
We now show $G'$ has no  component isomorphic to $K_7$. 
Suppose $C_1$ is a component of $G'$ isomophic to $K_7$.
Let $v \in V(C_1)$ so $d_{G'}(v)=6$. 
Let $N_G(v) = \{v_1, v_2, v_3\}$, $N(v_1) = \{v, v_4, v_5\}$, $N(v_2) = \{v, v_6, v_7\}$, and $N(v_3) = \{v, v_8, v_9\}$. 

\textbf{Case 1:} Two of $v_1, v_2, v_3$ are in $V(C_1)$, say $v_1,v_2 \in V(C_1)$ and $v_3 \in I$.  Let $v_4, v_6, v_8, v_9 \in V(C_1)$ and $v_5, v_7 \in I$. 
Since $d_{G'}(v) = 6$, $|\{v_4, v_6, v_8, v_9\}| = 4$. 
First assume $v_4v_6 \in E(G)$.
Each $z\in\{v_4, v_6\}$ must have a neighbor in $I$, so $z$ cannot be adjacent to also $v_8$ or $v_9$. 
In order for $v_1$ (resp. $v_2$) to be adjacent to $v_8$ and $v_9$ in $G'$, $v_5$ (resp. $v_7$) must be adjacent to both $v_8$ and $v_9$.
Yet, $v_3$ is adjacent to neither $v_4$ nor $v_6$ in $G'$, which is a contradiction to  $C_1$ being a component of $G'$ isomorphic to $K_7$. 

Thus, we may assume $v_4v_6 \notin E(G)$, which further implies $v_4$ and $v_6$ have a common neighbor $w$. 
In order for $v_4$ (resp. $v_6$) to be adjacent to $v_2$ (resp. $v_1$) in $G'$, $v_4$ (resp. $v_6$) must be adjacent to $v_7$ (resp. $v_5$). 
In order for $z\in\{v_8, v_9\}$ to be adjacent to $v_1$ (resp. $v_2$) in $G'$, $z$ must be adjacent to a vertex in $\{v_4, v_5\}$ (resp. $\{v_6, v_7\}$).
Therefore, without loss of generality, $w=v_8$ and $v_5, v_7$ have a common neighbor $v_9$. 
Hence, $G$ is isomorphic to the Petersen graph and $I=\{v_3, v_5, v_7\}$.
Yet, the maximum size of an independent set in the Petersen graph is $4$, which contradicts the maximality of $I$. 

\textbf{Case 2:} 
One of $v_1, v_2, v_3$ is in $V(C_1)$, say $v_1 \in V(C_1)$ and $v_2, v_3 \in I$. 
Let $v_5 \in I$ and $v_4, v_6, v_7, v_8, v_9 \in V(C_1)$. 
Since $d_{G'}(v) = 6$, $|\{v_1, v_4, v_6, v_7, v_8, v_9\}| = 6$. 
Since the distance between $v_1$ and each of $v_6, v_7, v_8, v_9$ is at most $2$, $v_4$ is adjacent to two of $v_6, v_7, v_8, v_9$. 
Therefore, $v_4$ has no neighbor in $I$, which contradicts the maximality of $I$.

\textbf{Case 3:} 
None of $v_1, v_2, v_3$ is in $V(C_1)$, so $v_1, v_2, v_3\in I$. 
Since $d_{G'}(v) = 6$, $|\{v_4, v_5, v_6, v_7, v_8, v_9\}| = 6$. 
Since $v_4$ is adjacent to each of $v_6, v_7, v_8, v_9$ in $G'$, $v_4v_5 \notin E(G)$. Similarly, $v_6v_7, v_8v_9 \notin E(G)$. 
Assume  $v_5v_6 \in E(G)$.
Since $v_6$ is adjacent to $v_4, v_8, v_9$ in $G'$, without loss of generality, we may assume $v_5v_8\in E(G)$ and $v_6$ has a neighbor $w$ where $v_4, v_9\in N_G[w]$. 
If $w\not\in V(C_1)$ or $w=v_4$, then since $v_8v_9\not\in E(G)$, $v_5$ is not adjacent to $v_9$ in $G'$, a contradiction.
Otherwise, $w=v_9$.
Since $v_5$ (resp. $v_9$) is adjacent to $v_7$ in $G'$, $v_8$ (resp. $v_4$) is adjacent to $v_7$. 
Hence, $G$ is isomorphic to the Petersen graph and $I=\{v_1, v_2, v_3\}$. 
Yet, the maximum size of an independent set in the Petersen graph is $4$, which contradicts the maximality of $I$. 

Now assume $\{v_4,v_5,v_6,v_7,v_8,v_9\}$ is an independent set in $G$. 
Each of $v_4$ and $v_5$ must be connected to $v_6,v_7,v_8,v_9$ via two vertices in $I$. 
Let $N(v_4) = \{v_1, v_{10}, v_{11}\}$ and $N(v_5) = \{v_1, v_{12}, v_{13}\}$. 
If $v_{10}$ is adjacent to $v_6, v_7$, then $v_{11}$ is adjacent to $v_8, v_9$. 
Now, $v_6$ cannot be adjacent to all of $v_5, v_8, v_9$ in $G'$, a contradiction. 
Otherwise, $v_{10}$ is adjacent to $v_6, v_8$, and $v_{11}$ is adjacent to $v_7, v_9$. 
In order for $v_6$ to be adjacent $v_5, v_9$ and $v_7$ to be adjacent to $v_5, v_8$ in $G'$, up to symmetry, $v_{12}$ must be adjacent to $v_6, v_9$ and $v_{13}$ must be adjacent to  $v_7, v_8$. 
Hence $G$ is the Heawood graph, which contradicts the choice of $G$. 
%
\end{proof}

\begin{thm}\label{listthm2}
Every subcubic graph is \pch{2}{3}.    
\end{thm}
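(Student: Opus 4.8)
The plan is to mirror the strategy used for Theorem~\ref{listthm1}, but now working with a stronger structural invariant than ``maximum independent set'' because the budget of $2$-colors has dropped from $6$ to $3$. Let $G$ be a connected subcubic graph and $L$ a packing $\mathcal L_2$-list assignment with $|L_1(v)|=2$ and $|L_2(v)|=3$ for every $v$; as before we may assume $G$ is cubic, and we will need to handle the Petersen graph separately since the whole statement is sharp there. The two $1$-colors give us, essentially, a proper $L_1$-coloring; the key point is that $G$ is $2$-list-colorable in the ordinary sense only when $G$ is not $4$-chromatic on some subgraph, which fails in general — so we cannot just $1$-color everything. Instead I would first use the two $1$-colors to properly color a large, suitably chosen subset $A\subseteq V(G)$ with colors from the $L_1$-lists so that $G-A$ (more precisely, its square restricted to the uncolored vertices) has small enough maximum degree to be coped with by the three remaining $2$-colors.

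Concretely, the quantitative target is: choose $A$ so that in $H := (G^2)[V(G)\setminus A]$ every vertex has degree at most $2$ (so $H$ is a disjoint union of paths and cycles), whence $H$ is $3$-choosable unless a component is an odd cycle — which would need its own short argument, or the stronger fact that paths-and-cycles with all lists of size $3$ are always $L$-colorable. Since every vertex of the cubic graph $G$ has $G^2$-degree at most $9$, to knock a vertex down to $H$-degree $\le 2$ we need at least $7$ of its $G^2$-neighbors (or itself) to lie in $A$. The natural candidate for $A$ is a set that is ``$2$-dominating in $G$'' or a maximal $2$-packing's complement; but the cleanest route, paralleling the Heawood/$K_7$ analysis in the proof of Theorem~\ref{listthm1}, is to take $A$ to be the complement of a maximum \emph{$2$-independent set} (equivalently, an independent set in $G^2$): if $S$ is a maximum $2$-independent set, color $S$ with $2$-colors greedily — wait, that reverses roles. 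Let me restate: I would instead take $A$ to be a set that induces a graph of small component size, $1$-color $A$ properly from the $L_1$-lists (possible since two colors suffice on any graph whose components are paths/short even structures, or more robustly on any forest), and show $(G^2)[V(G)\setminus A]$ has maximum degree $\le 2$; then finish with the three $2$-colors. The existence of such an $A$ in any cubic non-Petersen graph is a purely structural claim about subcubic graphs and is where I expect to invoke a maximality argument exactly as in Cases~1--3 of the previous proof.

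The main obstacle, and the part requiring real care, is the same kind of exceptional-configuration analysis that occupied most of the proof of Theorem~\ref{listthm1}: after fixing the structural set $A$ via a maximality (or extremality) argument, one must rule out the finitely many cubic graphs where the reduced graph $(G^2)[V(G)\setminus A]$ still contains a vertex of degree $\ge 3$ or contains an odd cycle forcing a list-coloring failure. I expect the only genuine exceptions to be the Heawood graph and the Petersen graph, each handled by an explicit coloring (for Heawood, an explicit packing $(1^2,2^3)$-list-coloring using a symmetric colour pattern; for Petersen, either an explicit colouring or the observation that it is \pch{2}{3} even though it is not \pc{2}{2}, since lists of size $2$ for $1$-colors give more freedom than a single pair of $1$-colors). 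The rest is bookkeeping: verify that two $1$-colors suffice on the chosen $A$, verify the degree bound on the complement, and invoke the list version of Brooks's theorem (Vizing~\cite{V1}) together with the easy fact that every path and cycle is $3$-choosable to complete the $2$-colorphase.

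Finally, I would double-check sharpness in the same breath: the Petersen graph is not \pc{2}{2} (hence not \pch{2}{2}) by Gastineau and Togni~\cite{GT1}, so the parameters $(1^2,2^3)$ cannot be lowered, which is why the theorem is stated as it is and why Petersen must appear as the unique hard instance in the argument above.
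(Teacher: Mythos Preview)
Your plan has the right overall architecture --- split $V(G)$ into a part $A$ that gets $1$-colored from the $L_1$-lists and a part $V(G)\setminus A$ whose square is handled by list Brooks with the three $2$-colors --- but the proof never actually happens, because you never commit to a definition of $A$. You float ``complement of a maximum $2$-independent set,'' retract it, then suggest ``a set inducing small components,'' and finally punt to ``a maximality argument as in the previous proof.'' None of these is made precise, and the structural claim you need (that such an $A$ exists with $(G^2)[V\setminus A]$ of maximum degree at most $2$) is asserted rather than proved. The paper's key move, which you are missing, is to define the $1$-colored set \emph{extremally via the list assignment itself}: let $I$ be a set of maximum size that can be properly colored from the $L_1$-lists, and among all such maximum $I$, choose one minimizing the number of components of $G-I$. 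The first condition immediately forces every vertex of $G-I$ to have at least two $I$-neighbors (else it could be absorbed), so $G-I$ has maximum degree at most $1$; the secondary minimization then drives a short case analysis showing $(G^2)[V\setminus I]$ has maximum degree at most $3$ and no $K_4$ component, whence list Brooks finishes.

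Two further corrections. First, your target of maximum degree $\le 2$ in the reduced square is stronger than necessary and not what the paper achieves; degree $\le 3$ with no $K_4$ is enough, and the ``odd cycle'' worry is a non-issue since all cycles are $3$-choosable. Second, neither the Petersen graph nor the Heawood graph requires separate treatment here: the extremal choice of $I$ handles every cubic graph uniformly, and the places where Petersen surfaced in the previous proof (as a contradiction to maximality of an independent set) simply do not arise when $I$ is defined via list-colorability rather than independence.
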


\begin{proof}
Let $G$ be a connected subcubic graph and let $L$ be a  list assignment such that $|L_1(v)| = 2$ and $|L_2(v)| = 3$ for each $v \in V(G)$. 
We may assume $G$ is cubic since every connected subcubic graph is a subgraph of a connected cubic graph. 
Use a $1$-color on as many vertices as possible, and let $I_1$ denote the set of vertices colored with a $1$-color.
Among all maximum $I_1$, let $I$ be one that minimizes the number of  components of $G':= G-I$.
For each vertex $v \in I$,  let $\varphi(v)$ denote the $1$-color used on $v$.

We claim that the maximum degree of $G'$ is at most $1$. 
Indeed, if a vertex $u$ of $G'$ has at most one neighbor $v$ in $I$, then we can add $u$ to $I$ by using a $1$-color in $L_1(u)-\{\varphi(v)\}$ on $u$ to extend $\varphi$ to $u$. 
This contradicts the maximality of $I$. 
%
%
Therefore, each component of $G'$ is either a $K_1$ or a $K_2$. 
Let $H_I$ be the graph obtained from the square of $G$ and removing the vertices in $I$.
For simplicity, we use $H$ instead of $H_I$.

\begin{claim}\label{triangle}
If $u\in I$ with neighbors $u_1, u_2, u_3\in V(G')$, then 
$u_1u_2, u_2u_3, u_3u_1 \not\in E(G)$.
Moreover, if $v\neq u$ is a neighbor of $u_i$, then $v \in I$ and all neighbors of $v$ except $u_i$ are in $I$.
\end{claim}

\begin{proof}
We first show that every neighbor of $u_i$ must be in $I$. 
If a neighbor $v$ of $u_i$ is not in $I$, then we can add $u_i$ to $I$ by removing the ($1$-)color on $u$, coloring $u_i$ with a $1$-color in $L_1(u_i)-\varphi(N(u_i))$, and recoloring $u$ with a $1$-color in $L_1(u)-\{\varphi(u_i)\}$.
This contradicts the maximality of $I$.
Since all neighbors of $I$ are not in $I$, no neighbors of $u$ are adjacent to each other. 

%

Let $v\neq u$ be a neighbor of $u_i$. 
Suppose a neighbor $v'\neq u_i$ of $v$ is not in $I$. 
From $I$, we can remove $v$ and add $u_i$ by removing the ($1$-)colors on $v$ and $u$, coloring $u_i$ with a $1$-color in $L_1(u_i)-\varphi(N(u_i))$, and recoloring $u$ with a $1$-color in $L_1(u)-\{\varphi(u_i)\}$.
This decreases the number of components of $G'$, which contradicts the choice of $I$. 
%
%
\end{proof}

By Vizing's Theorem~\cite{V1} (list version of Brooks's Theorem~\cite{1941Brooks}), $H$ is $3$-choosable if $H$ has maximum degree at most $3$, unless a component of $H$ is $K_4$. 
We now show that $H$ has maximum degree at most $3$ and   $H$ has no component isomorphic to $K_4$. 
Given a vertex $u$ of $H$, there are two cases depending on the component of $u$ in $G'$.
Note that the component is either $K_1$ or $K_2$. 

\textbf{Case 1:} 
$u$ is in a component isomorphic to $K_1$ in $G'$. 
Let $N(u) = \{u_1, u_2, u_3\}$, so $u_1, u_2, u_3 \in I$. Let $N(u_1) = \{u, u_4, u_5\}$, $N(u_2) = \{u, u_6, u_7\}$, and $N(u_3) = \{u, u_8, u_9\}$. 
If both $u_i$ and $u_{i+1}$ are in $V(G')$ for some $i\in\{4,6,8\}$, then by Claim~\ref{triangle}, $\{u_4, u_5, u_6, u_7, u_8, u_9\}-\{u_i, u_{i+1}\}\subseteq I$, so $d_H(u) = 2$. 
Otherwise at least one of $u_i, u_{i+1}$ is in $I$ for every $i\in\{4,6,8\}$. 
In every case, $d_H(u)\leq2$, except when exactly one of $u_i, u_{i+1}$ is in $I$ for every $i\in\{4,6,8\}$, $d_H(u)=3$.   
In the exceptional case, some color $1_x\in L_1(u)$ appears once  among $\varphi(u_1), \varphi(u_2), \varphi(u_3)$, say $\varphi(u_i)$. 
From $I$, we can remove $u_i$ and add $u$ by removing the (1-)color on $u_i$ and coloring $u$ with $1_x$.
This decreases the number of components of $G'$, which contradicts the choice of $I$.

\textbf{Case 2:} $u$ is in a component $uv$ isomorphic to $K_2$ in $G'$. 
Let $N(u) = \{v, u_1, u_2\}$ and $N(v) = \{u, v_1, v_2\}$, so $u_1, u_2, v_1, v_2\in I$.
By Claim~\ref{triangle}, for $z\in\{u, v\}$, every neighbor $w\not\in\{u, v\}$ of $z$ has a neighbor $w'\not\in\{u, v\}$  that is in $I$.
Thus, $d_H(u)\leq2$, except when $|\{u_1,u_2,v_1,v_2\}|=4$, $d_H(u)=3$. 
In the exceptional case, $L_1(u)=\{\varphi(u_1), \varphi(u_2)\}$ since $u\not\in I$ and $L_1(v)=\{\varphi(v_1), \varphi(v_2)\}$ since $v\not\in I$.
Without loss of generality we may assume $\varphi(u_1)\neq\varphi(v_1)$. 
From $I$, we can remove $u_1, v_1$ and add $u, v$ by removing the (1-)colors on $u_1,v_1$ and coloring $u$ and $v$ with $\varphi(u_1)$ and $\varphi(v_1)$, respectively. 
This decreases the number of components of $G'$ and contradicts the choice of $I$, unless $N[v_1]-v \subseteq I$. However, this means $d(v) \le 2$ and $H$ cannot be isomorphic to $K_4$.
\end{proof}

\section{Open Questions}
\begin{itemize}
    \item Determine $f_7(2)$, namely, the minimum $k$ for which every planar graph with girth at least $7$ has a \pc{2}{k}.
    We have shown $2\leq f_7(2)\leq 12$.

    \item Open Question~\ref{op:gk}: 
    For $k\in\{1, \ldots, 11\}$, determine $g_k$, namely, the minimum $g$ such that every planar graph with girth at least $g$ has a \pc{2}{k}.
    We know $8 \le g_1 \le 10$ and $7 \le g_k \le 8$ for $k\in\{2, \ldots, 11\}$.

    \item Determine if every subcubic planar graph is \pch{1}{5} or \pch{2}{2}.
\end{itemize}

\bibliographystyle{abbrv}
{\small
\bibliography{ref}}

\end{document}